\newcommand{\A}{{\mathcal{A}}}
\newcommand{\C}{{\mathcal{C}}}
\newcommand{\F}{{\mathcal{F}}}
\newcommand{\I}{{\mathcal{I}}}
\newcommand{\T}{{\mathcal{T}}}
\newcommand{\ti}{{\text{-}}}
\title{Cohomologie des foncteurs polynomiaux sur les groupes libres}
\author{Aur\'elien DJAMENT\thanks{CNRS, Laboratoire de mathématiques Jean Leray (UMR 6629), aurelien.djament@univ-nantes.fr.}\,, Teimuraz PIRASHVILI\thanks{Department of Mathematics, University of Leicester, University Road, Leicester, LE1
7RH, UK, tp59@leicester.ac.uk. Cet auteur est partiellement soutenu par la bourse ST08/3-387 de la GNSF.}\, et Christine VESPA\thanks{Institut de Recherche Mathématique Avancée, université de Strasbourg,
vespa@math.unistra.fr. Cet auteur est partiellement soutenu par le projet ANR-11-BS01-0002
HOGT : Homotopie, Opérades et Groupes de Grothendieck-Teichmüller.}}
\newtheorem{thi}{Th\'eor\`eme}
\newtheorem{thm}{Th\'eor\`eme}[section]
\newtheorem{pr}[thm]{Proposition}
\newtheorem{cor}[thm]{Corollaire}
\newtheorem{lm}[thm]{Lemme}
\theoremstyle{definition}
\newtheorem{nota}[thm]{Notation}
\theoremstyle{remark}
\newtheorem{rem}[thm]{Remarque}
\begin{document}

\maketitle

\begin{abstract}
On montre que les groupes d'extensions entre foncteurs polynomiaux sur les groupes libres sont les mêmes dans la catégorie de tous les foncteurs et dans une sous-catégorie de foncteurs polynomiaux de degré borné. On donne quelques applications. 
\end{abstract}

\begin{small}
\begin{center}
 \textbf{Abstract}
\end{center}

\smallskip

We show that extension groups between two polynomial functors on free groups are the same in the category of all functors and in a subcategory of polynomial functors of bounded degree. We give some applications.

\end{small}

\medskip

\noindent
{\em Mots clefs} : catégories de foncteurs ; groupes libres ; foncteurs polynomiaux ; groupes d'extensions.

\smallskip

\noindent
{\em Classification MSC 2010} :  18A25, 18G15, 20J15 (18E15, 18G10).

\section*{Introduction}

Ce travail est une contribution à l'étude des groupes d'extensions dans la catégorie des foncteurs des groupes libres (de rang fini) vers les groupes abéliens. Cette catégorie a d'abord été considérée, du point de vue cohomologique, dans l'article de Jibladze et du deuxième auteur \cite{PJ} (§\,5A) sur la cohomologie des théories algébriques. Plus récemment, les premier et troisième auteurs l'ont utilisée, dans \cite{DV2}, pour établir des résultats d'annulation d'homologie stable des groupes d'automorphismes des groupes libres à coefficients tordus. Conjecturalement, l'algèbre homologique dans cette catégorie de foncteurs devrait gouverner le calcul d'autres groupes d'homologie stable des groupes d'automorphismes des groupes libres à coefficients tordus. Cette algèbre homologique s'avère plus facile d'accès que dans les catégories de foncteurs entre espaces vectoriels sur un corps fini, très étudiées en raison de leurs liens avec la topologie algébrique ou la $K$-théorie algébrique (cf. par exemple \cite{FFPS}), notamment parce que le foncteur d'abélianisation possède une résolution projective explicite simple (déjà utilisée dans \cite{PJ}). L'article \cite{DV2} se sert également de façon cruciale de la structure des sous-catégories de {\em foncteurs polynomiaux} dans cette catégorie de foncteurs.

  Dans le présent travail, on montre que les groupes d'extensions\,\footnote{Un résultat similaire vaut pour les groupes de torsion.} entre deux foncteurs polynomiaux sur les groupes libres sont les mêmes dans la catégorie de tous les foncteurs ou dans une sous-catégorie de foncteurs polynomiaux de degré donné. Ce résultat contraste avec la situation, plus délicate, des foncteurs sur une catégorie additive (voir \cite{P-extor} et \cite{Dja-Pg}). Nous donnons également quelques applications.

\paragraph*{Description des résultats}

Commençons par quelques notations générales. Si $\C$ est une petite catégorie, on note $\F(\C)$ la catégorie des foncteurs de $\C$ vers la catégorie $\mathbf{Ab}$ des groupes abéliens. Si la catégorie $\C$ possède un objet nul $0$ et est munie d'une structure monoïdale symétrique dont $0$ est l'unité --- par exemple, le coproduit catégorique, lorsqu'il existe dans $\C$ ---, on dispose d'une notion d'{\em effets croisés} et de {\em foncteurs polynomiaux} dans $\F(\C)$. L'origine de ces notions remonte à Eilenberg-MacLane (\cite{EML}, {\em Chapter}~II), lorsque $\C$ est une catégorie de modules, et se généralise sans difficulté au cas qu'on vient de mentionner (voir \cite{HPV}, §\,2). Pour tout entier $d$, on note $\F_d(\C)$ la sous-catégorie pleine de $\F(\C)$ des foncteurs polynomiaux de degré au plus $d$.

On note $\mathbf{gr}$ la catégorie des groupes libres de rang fini, ou plus exactement le squelette constitué des groupes $\mathbb{Z}^{*n}$ (l'étoile désignant le produit libre). Comme signalé plus haut, notre résultat principal est le suivant :

\begin{thi}\label{thf}
 Soient $d\in\mathbb{N}$ et $F$, $G$ des objets de $\F_d(\mathbf{gr})$. L'application linéaire graduée naturelle
$${\rm Ext}^*_{\F_d(\mathbf{gr})}(F,G)\to {\rm Ext}^*_{\F(\mathbf{gr})}(F,G)$$
qu'induit le foncteur d'inclusion $\F_d(\mathbf{gr})\to\F(\mathbf{gr})$ est un isomorphisme.
\end{thi}

La démonstration de ce théorème est donnée à la section~\ref{sd}. Elle repose sur une propriété d'annulation cohomologique très inspirée d'une propriété analogue dans les catégories de foncteurs sur une catégorie additive due au deuxième auteur (voir \cite{P-add}) et sur les propriétés de l'anneau gradué associé à la filtration de l'anneau d'un produit direct de groupes libres par les puissances de son idéal d'augmentation, propriétés qui sont rappelées dans la section~\ref{sp}. La dernière section est consacrée aux applications. Tout d'abord, on calcule la dimension homologique des puissances tensorielles de l'abélianisation et des foncteurs de Passi dans les catégories $\F_d(\mathbf{gr})$ (proposition~\ref{pr-dh} et corollaire~\ref{cor-passi}). On montre ensuite que les foncteurs polynomiaux de degré donné de $\mathbf{gr}$ vers les espaces vectoriels sur $\mathbb{Q}$ forment des catégories de dimension globale finie (proposition~\ref{prdq}). Nous montrons également, dans la proposition~\ref{pr-beta}, qu'un foncteur adjoint apparemment mystérieux (et en tout cas non explicite) mentionné dans \cite{DV2} possède un comportement cohomologique agréable. 

\paragraph*{Quelques notations et rappels}

Si $k$ est un anneau, on note $k\ti\mathbf{Mod}$ la catégorie des $k$-modules à gauche. On note aussi $\F(\C;k)$, lorsque $\C$ est une petite catégorie, la catégorie des foncteurs de $\C$ vers $k\ti\mathbf{Mod}$. Ainsi, $\F(\C)=\F(\C;\mathbb{Z})$. La catégorie $\F(\C;k)$ est une catégorie abélienne qui hérite des propriétés de régularité de la catégorie but $k\ti\mathbf{Mod}$ ; en effet, les suites exactes, sommes, produits, etc. se testent au but. Si $k$ est commutatif, on dispose d'une structure monoïdale symétrique notée $\otimes$ sur $\F(\C;k)$ qui est le produit tensoriel sur $k$ au but.

La catégorie $\F(\C;k)$ possède suffisamment d'objets projectifs : en effet, le lemme de Yoneda montre que le foncteur
$$P^\C_c:=k[\C(c,-)]$$
(où $c$ est un objet de $\C$ ; $k[-]$ désigne le foncteur de linéarisation des ensembles vers $k\ti\mathbf{Mod}$) représente l'évaluation en $c$, il est donc projectif (et de type fini), et les foncteurs $P^\C_c$ engendrent la catégorie $\F(\C;k)$.

Pour alléger, on notera $P^\mathbf{gr}_n$ pour $P^\mathbf{gr}_{\mathbb{Z}^{*n}}$. Comme $*$ est un coproduit dans la catégorie $\mathbf{gr}$, on dispose d'isomorphismes canoniques $P^\mathbf{gr}_i\otimes P^\mathbf{gr}_j\simeq P^\mathbf{gr}_{i+j}$.

Si la catégorie $\C$ possède un objet nul $0$ et est munie d'une structure monoïdale symétrique dont $0$ est l'unité, on dispose dans $\F(\C;k)$ comme dans $\F(\C)$ d'une notion d'effets croisés et de foncteurs polynomiaux sur $\F(\C;k)$ ; on note encore $\F_n(\C;k)$ la sous-catégorie pleine de $\F(\C;k)$ constituée des foncteurs polynomiaux de degré au plus $n$. C'est une sous-catégorie épaisse de $\F(\C;k)$ stable par limites et colimites. On rappelle également que le produit tensoriel de deux foncteurs polynomiaux est polynomial (on suppose ici $k$ commutatif), avec pour degré la somme des degrés des foncteurs initiaux. Le lecteur pourra trouver davantage de détails et de propriétés des foncteurs polynomiaux dans \cite{HPV}, §\,2.

Comme la catégorie $\mathbf{gr}$ est pointée, tout foncteur de $\F(\mathbf{gr})$ se scinde de manière unique (à isomorphisme près) et naturelle en la somme directe d'un foncteur constant et d'un foncteur {\em réduit}, c'est-à-dire nul sur le groupe trivial. On notera $\bar{P}$ la partie réduite de $P^\mathbf{gr}_1$.

\paragraph*{Remerciements} Les deux derniers auteurs souhaitent remercier les organisateurs du semestre intitulé {\em Grothendieck-Teichmuller Groups, Deformation and Operads} qui a eu lieu à l'Institut Isaac Newton de Cambridge en 2013 et pendant lequel ce travail à débuté ainsi que l'Institut pour son hospitalité et les excellentes conditions de travail qui y sont proposées. Le second auteur est reconnaissant envers l'université de Leicester pour lui avoir accordé un congé pour recherche.

\section{Préliminaires sur les puissances de l'idéal d'augmentation d'un anneau de groupe}\label{sp}

(Les propriétés rappelées dans cette section sont classiques. Une référence générale est \cite{Passi}.)

Pour tout groupe $G$, on note $\I(G)$ l'idéal d'augmentation de l'anneau de groupe $\mathbb{Z}[G]$ ; pour tout $n\in\mathbb{N}$, on note $\I^n(G)$ la $n$-ème puissance de cet idéal. Pour tout $n\in\mathbb{N}$, $\I^n$ définit un foncteur de la catégorie $\mathbf{Grp}$ des groupes vers la catégorie $\mathbf{Ab}$. On dispose ainsi d'un anneau gradué ${\rm gr}(\mathbb{Z}[G]):=\underset{n\in\mathbb{N}}{\bigoplus}(\I^n/\I^{n+1})(G)$ naturel en $G$. En degré $1$, on dispose d'un isomorphisme naturel $(\I/\I^2)(G)\simeq G_{ab}$ (abélianisation de $G$). En particulier, le produit induit des morphismes naturels $G_{ab}^{\otimes n}\to (\I^n/\I^{n+1})(G)$, qui sont toujours des épimorphismes. Ainsi, on dispose d'un morphisme naturel surjectif d'anneaux gradués de l'algèbre tensorielle sur $G_{ab}$ vers ${\rm gr}(\mathbb{Z}[G])$. Une conséquence importante de la théorie de Magnus est que ce morphisme est un isomorphisme si $G$ est un groupe libre (voir par exemple \cite{Passi}, chapitre~VIII, {\em Theorem}~6.2).

\begin{lm}\label{lmi1}
 Si $G$ est un groupe libre, pour tous entiers naturels $r$ et $i$, on dispose d'un isomorphisme naturel
$$H_i(G;\I^r(G))\simeq\left\lbrace\begin{array}{ll}
 (\I^r/\I^{r+1})(G) & \text{si } i=0\\
 0 & \text{sinon.}
 \end{array}
 \right.$$
\end{lm}

\begin{proof}
 On procède par récurrence sur $r$. L'assertion est triviale pour $r=0$, on suppose donc $r>0$ et le résultat établi pour $r-1$. Dans la suite exacte de $\mathbb{Z}[G]$-modules à gauche
$$0\to\I^r(G)\to\I^{r-1}(G)\to (\I^{r-1}/\I^r)(G)\to 0,$$
l'action de $G$ est {\em triviale} sur $(\I^{r-1}/\I^r)(G)$, car la flèche de droite s'identifie à la projection de $\I^{r-1}(G)$ sur ses coïnvariants sous l'action de $G$ --- ce qui prouve déjà le résultat en degré homologique $0$. Comme l'homologie de $G$ est sans torsion sur $\mathbb{Z}$ et concentrée en degré $0$ et $1$, puisque $G$ est libre, on en déduit que $H_i(G;(\I^{r-1}/\I^r)(G))$ est nul pour $i>1$, isomorphe à $G_{ab}\otimes (\I^{r-1}/\I^r)(G)$ pour $i=1$ et à $(\I^{r-1}/\I^r)(G)$ pour $i=0$. La suite exacte longue d'homologie associée à la suite exacte courte précédente fournit donc le résultat d'annulation souhaité en degré homologique strictement positif. 
\end{proof}

\begin{rem}
 Cette démonstration permet aussi de voir, en regardant la fin de ladite suite exacte longue, que le morphisme de liaison
$$G_{ab}\otimes (\I^{r-1}/\I^r)(G)\simeq H_1(G;(\I^{r-1}/\I^r)(G))\to H_0(G;\I^r(G))\simeq (\I^r/\I^{r+1})(G)$$
est un isomorphisme. Comme ce morphisme s'identifie au produit
$$(\I/\I^2)(G)\otimes (\I^{r-1}/\I^r)(G)\to (\I^r/\I^{r+1})(G),$$
cela fournit une démonstration du fait classique rappelé plus haut que l'anneau gradué ${\rm gr}(\mathbb{Z}[G])$ est naturellement isomorphe à l'algèbre tensorielle sur $G_{ab}$ (pour $G$ libre).
\end{rem}

\begin{lm}\label{lmtgl}
 Soient $G$ un groupe libre, $r$ et $i$ des entiers naturels. On dispose d'un isomorphisme naturel
$${\rm Tor}^{\mathbb{Z}[G]}_i(\I(G),\I^r(G))\simeq\left\lbrace\begin{array}{ll}
 \I^{r+1}(G) & \text{si } i=0\\
 0 & \text{sinon.}
 \end{array}
 \right.$$
\end{lm}

\begin{proof}
 Utilisant la suite exacte longue d'homologie associée à la suite exacte courte de $\mathbb{Z}[G]$-modules à droite $0\to\I(G)\to\mathbb{Z}[G]\to\mathbb{Z}\to 0$, on voit que ${\rm Tor}^{\mathbb{Z}[G]}_i(\I(G),\I^r(G))$ est isomorphe à $H_{i+1}(G;\I^r(G))$ lorsque $i>0$, et l'on obtient une suite exacte
$$0\to H_1(G;\I^r(G))\to {\rm Tor}^{\mathbb{Z}[G]}_0(\I(G),\I^r(G))\to\I^r(G)\to H_0(G;\I^r(G))\to 0,$$
de sorte que le lemme~\ref{lmi1} permet de conclure.
\end{proof}

\begin{pr}\label{pr-seb}
Pour tout $r\in\mathbb{N}$, il existe un complexe de chaînes de foncteurs $\mathbf{Grp}\to\mathbf{Ab}$ du type
$$\dots\to\I^{\otimes (n+1)}\otimes\I^r\to\I^{\otimes n}\otimes\I^r\to\dots\to\I^{\otimes 2}\otimes\I^r\to\I\otimes\I^r$$
dont la restriction aux groupes libres a une homologie isomorphe à $\I^{r+1}$ en degré $0$ et nulle en degré strictement positif.
\end{pr}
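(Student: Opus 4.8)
Le plan est d'identifier le complexe recherché à un complexe bar calculant ${\rm Tor}^{\mathbb{Z}[G]}_*(\I,\I^r)$, puis d'invoquer le lemme~\ref{lmtgl}. On considère le complexe bar normalisé $C_\bullet=B(\I,\mathbb{Z}[G],\I^r)$, dont le terme de degré $n$ est $\I\otimes\I^{\otimes n}\otimes\I^r=\I^{\otimes(n+1)}\otimes\I^r$ et dont la différentielle envoie $[x_0|\cdots|x_n]\otimes y$ (où $x_i\in\I$ et $y\in\I^r$) sur
$$\sum_{i=0}^{n-1}(-1)^i[x_0|\cdots|x_ix_{i+1}|\cdots|x_n]\otimes y+(-1)^n[x_0|\cdots|x_{n-1}]\otimes x_ny,$$
en utilisant la multiplication $\I\otimes\I\to\I^2\subset\I$ pour deux facteurs adjacents et l'action $\I\otimes\I^r\to\I^{r+1}\subset\I^r$ du dernier facteur sur $\I^r$. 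Toutes ces opérations étant naturelles en $G$, $C_\bullet$ est un complexe de foncteurs $\mathbf{Grp}\to\mathbf{Ab}$, de terme de plus bas degré $\I\otimes\I^r$ : c'est exactement le complexe de l'énoncé.

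Pour en calculer l'homologie sur les groupes libres, on écrit $C_\bullet=\I\otimes_{\mathbb{Z}[G]}B(\mathbb{Z}[G],\mathbb{Z}[G],\I^r)$, où $B(\mathbb{Z}[G],\mathbb{Z}[G],\I^r)_n=\mathbb{Z}[G]\otimes\I^{\otimes n}\otimes\I^r$ est la résolution bar du $\mathbb{Z}[G]$-module à gauche $\I^r$. Pour tout groupe $G$, les groupes abéliens $\I(G)$ et $\I^r(G)$ sont libres (sous-groupes du groupe abélien libre $\mathbb{Z}[G]$), donc $\I(G)^{\otimes n}\otimes\I^r(G)$ l'est également ; les termes de cette résolution bar sont par conséquent des $\mathbb{Z}[G]$-modules libres. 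Comme la résolution bar admet toujours une homotopie contractante $\mathbb{Z}$-linéaire, c'est une authentique résolution libre de $\I^r(G)$, et l'on obtient $H_*(C_\bullet(G))\simeq{\rm Tor}^{\mathbb{Z}[G]}_*(\I(G),\I^r(G))$.

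Le lemme~\ref{lmtgl} permet alors de conclure : pour $G$ libre, cette homologie vaut $\I^{r+1}(G)$ en degré $0$ (l'isomorphisme étant induit par la multiplication $\I\otimes\I^r\to\I^{r+1}$) et s'annule en degré strictement positif. La principale subtilité est l'identification soigneuse du complexe bar abstrait avec le complexe explicite de l'énoncé et la vérification de sa naturalité en $G$ ; cela fait, le calcul d'homologie se ramène formellement au lemme~\ref{lmtgl} déjà démontré.
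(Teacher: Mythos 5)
Votre démonstration est correcte et suit essentiellement la même voie que celle de l'article : dans les deux cas, on identifie le complexe de l'énoncé au complexe bar (de Hochschild) normalisé à deux côtés $B(\I,\mathbb{Z}[G],\I^r)$, dont l'homologie calcule ${\rm Tor}^{\mathbb{Z}[G]}_*(\I(G),\I^r(G))$, puis on conclut par le lemme~\ref{lmtgl}. La seule différence est de présentation : l'article invoque l'homologie de Hochschild de l'anneau augmenté $\mathbb{Z}[G]$ à coefficients dans le bimodule $\I\otimes\I^r$ (en renvoyant à Weibel et en utilisant l'absence de torsion sur $\mathbb{Z}$), tandis que vous vérifiez directement que $B(\mathbb{Z}[G],\mathbb{Z}[G],\I^r)$ est une résolution libre de $\I^r(G)$, ce qui revient au même.
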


\begin{proof}
 D'une manière générale, si $A$ est un anneau augmenté, d'idéal d'augmentation $\bar{A}$, $M$ un $A$-module à droite et $N$ un $A$-module à gauche, on dispose d'un complexe de chaînes de groupes abéliens fonctoriel en $A$, $M$ et $N$
$$\dots\to M\otimes\bar{A}^{\otimes n}\otimes N\to M\otimes\bar{A}^{\otimes (n-1)}\otimes N\to\dots\to M\otimes\bar{A}\otimes N\to M\otimes N$$
(complexe de Hochschild normalisé) dont l'homologie est naturellement isomorphe à l'homologie de Hochschild de $A$ à coefficients dans le bimodule $M\otimes N$. Celle-ci est naturellement isomorphe à ${\rm Tor}^A_*(M,N)$ si $A$ et $M$ sont sans torsion sur $\mathbb{Z}$. (On pourra se référer par exemple à \cite{Wei}, §\,9.1 pour ces propriétés élémentaires de l'homologie de Hochschild.)

La proposition s'obtient en prenant $A=\mathbb{Z}[G]$, $M=\I(G)$ et $N=\I^r(G)$ et en appliquant le lemme~\ref{lmtgl}.
\end{proof}

\smallskip

Nous aurons besoin également d'examiner ${\rm gr}(\mathbb{Z}[G])$ lorsque $G$ est un produit direct de groupes libres. Pour cela, on donne quelques propriétés générales simples sur l'effet du foncteur ${\rm gr}(\mathbb{Z}[-])$ sur un produit direct de groupes.

On va voir que ce foncteur est {\em exponentiel} (i.e. transforme les produits directs en produits tensoriels) sur les groupes $G$ tels que ${\rm gr}(\mathbb{Z}[G])$ soit un groupe abélien sans torsion. (On a vu plus haut que les groupes libres possèdent cette propriété.) Remarquer que, si c'est le cas, alors les groupes abéliens $\I^n(G)$ et $\mathbb{Z}[G]/\I^n(G)$ sont également sans torsion. 

Soient $G$ et $H$ deux groupes. Si $M$ et $N$ sont des sous-groupes des groupes abéliens $\mathbb{Z}[G]$ et $\mathbb{Z}[H]$ respectivement, nous noterons $M.N$ l'image de $M\otimes N$ dans $\mathbb{Z}[G]\otimes\mathbb{Z}[H]\simeq\mathbb{Z}[G\times H]$. Si $M$ et $N$ sont des idéaux bilatères de $\mathbb{Z}[G]$ et $\mathbb{Z}[H]$ respectivement, alors $M.N$ est un idéal bilatère de $\mathbb{Z}[G\times H]$.

\begin{pr}\label{prp1}
 Soient $G$ et $H$ deux groupes. Dans $\mathbb{Z}[G\times H]$, on a
$$\I(G\times H)=\I(G).\mathbb{Z}[H]+\mathbb{Z}[G].\I(H)$$
et, plus généralement,
$$\I^r(G\times H)=\underset{i+j=r}{\sum}\I^i(G).\I^j(H)$$
pour tout $r\in\mathbb{N}$.
\end{pr}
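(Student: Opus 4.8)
The plan is to prove the identity $\I^r(G\times H)=\sum_{i+j=r}\I^i(G).\I^j(H)$ by induction on $r$, treating the case $r=1$ as the base step and reducing the general case to it via the multiplicativity of the powers of the augmentation ideal. Throughout I would use the canonical identification of rings $\mathbb{Z}[G\times H]\simeq\mathbb{Z}[G]\otimes\mathbb{Z}[H]$, under which multiplication is componentwise, that is $(a\otimes b)(a'\otimes b')=aa'\otimes bb'$, together with the embeddings $\mathbb{Z}[G],\mathbb{Z}[H]\hookrightarrow\mathbb{Z}[G\times H]$ sending $g\mapsto(g,e)$ and $h\mapsto(e,h)$; with this convention the subgroup $M.N$ is exactly the $\mathbb{Z}$-span of the products $mn=m\otimes n$ for $m\in M$, $n\in N$.

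For the base case $r=1$, the inclusion $\I(G).\mathbb{Z}[H]+\mathbb{Z}[G].\I(H)\subseteq\I(G\times H)$ is immediate, since the augmentation of $G\times H$ corresponds to $\varepsilon_G\otimes\varepsilon_H$ and therefore annihilates $\I(G).\mathbb{Z}[H]$ and $\mathbb{Z}[G].\I(H)$ on generators. For the reverse inclusion I would use that $\I(G\times H)$ is $\mathbb{Z}$-spanned by the elements $(g,h)-1$, together with the elementary identity
\[(g,h)-1=\big((g,e)-1\big)(e,h)+\big((e,h)-1\big),\]
whose first summand lies in $\I(G).\mathbb{Z}[H]$ and whose second lies in $\mathbb{Z}[G].\I(H)$. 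This settles $r=1$.

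For the inductive step, assuming the formula for $r-1$ and writing $\I^r(G\times H)=\I(G\times H)\cdot\I^{r-1}(G\times H)$, I would substitute the case $r=1$ and the induction hypothesis and distribute the product of ideals over the two sums. The computation rests on the bookkeeping identity
\[\big(\I^a(G).\I^b(H)\big)\cdot\big(\I^i(G).\I^j(H)\big)=\I^{a+i}(G).\I^{b+j}(H),\]
which follows at once from the componentwise multiplication recalled above and from $\I^a(G)\I^i(G)=\I^{a+i}(G)$. Applying this to the two generators $\I(G).\mathbb{Z}[H]$ and $\mathbb{Z}[G].\I(H)$ of $\I(G\times H)$ against each term $\I^i(G).\I^j(H)$ with $i+j=r-1$ produces precisely the terms $\I^{i+1}(G).\I^j(H)$ and $\I^i(G).\I^{j+1}(H)$, whose exponents all sum to $r$ and which collectively exhaust $\sum_{p+q=r}\I^p(G).\I^q(H)$.

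I do not expect a genuine obstacle: the statement is purely ring-theoretic, requires no hypothesis on $G$ and $H$, and the only point demanding a little care is the bookkeeping in the inductive step --- namely checking that every index with $p+q=r$ is covered without gap (each term with $p\geq 1$ coming from the first generator, each with $q\geq 1$ from the second, and every term having $p+q=r\geq 1$ so that at least one exponent is positive), and verifying that the ``dot'' operation is compatible with the ring multiplication, so that products of the subgroups $M.N$ behave as claimed.
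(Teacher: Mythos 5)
Your argument is correct and follows exactly the route the paper takes (the paper simply states that the case $r=1$ is immediate and that the general case follows by induction on $r$); your base-case identity $(g,h)-1=\big((g,e)-1\big)(e,h)+\big((e,h)-1\big)$ and the distributive bookkeeping in the inductive step are precisely the details being left implicit. Nothing to add.
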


\begin{proof}
La première propriété est immédiate et la deuxième s'en déduit par récurrence sur $r$. 
\end{proof}

\begin{pr}\label{prgp}
 Soient $G$ et $H$ deux groupes tels que les groupes abéliens ${\rm gr}(\mathbb{Z}[G])$ et ${\rm gr}(\mathbb{Z}[H])$ soient sans torsion.
\begin{enumerate}
 \item Pour tout entier naturel $n$, on dispose d'un isomorphisme naturel
$$(\I^n/\I^{n+1})(G\times H)\simeq\underset{i+j=n}{\bigoplus}(\I^i/\I^{i+1})(G)\otimes (\I^j/\I^{j+1})(H).$$
\item Le groupe abélien ${\rm gr}(\mathbb{Z}[G\times H])$ est sans torsion.
\item Pour tous entiers $0\leq t\leq n$, on a
$$\Big(\underset{i<t}{\underset{i+j=n}{\sum}}\I^i(G).\I^j(H)\Big)\cap (\I^t(G).\I^{n-t}(H))=\I^t(G).\I^{n-t+1}(H)$$
dans $\mathbb{Z}[G\times H]$ ; ce groupe est naturellement isomorphe à $\I^t(G)\otimes\I^{n-t+1}(H)$.
\end{enumerate}
\end{pr}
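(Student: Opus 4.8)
La démonstration repose de façon essentielle sur la platitude sur $\mathbb{Z}$ de tous les groupes abéliens en jeu. Notons $A=\mathbb{Z}[G]$, $B=\mathbb{Z}[H]$ et $C=A\otimes B\simeq\mathbb{Z}[G\times H]$, et posons $A_i=\I^i(G)$, $B_j=\I^j(H)$. L'hypothèse et la remarque qui précède entraînent que $A$, $B$, les $A_i$, $B_j$, les quotients $A/A_i$, $B/B_j$, les sous-quotients $A_i/A_{i'}$ et $B_j/B_{j'}$, ainsi que tous les facteurs gradués, sont sans torsion donc plats (une extension de groupes abéliens sans torsion étant sans torsion). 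En particulier chaque $A_i\otimes B_j$ s'injecte dans $C$, et l'on identifiera ce produit à son image ; la proposition~\ref{prp1} donne alors $\I^n(G\times H)=C^n$, où $C^n:=\sum_{i+j=n}A_i\otimes B_j=\sum_{i+j\geq n}A_i\otimes B_j$. L'outil central est le lemme élémentaire suivant : si $M'\subseteq M$ et $N'\subseteq N$ sont des sous-modules de groupes abéliens plats tels que $M/M'$ et $N/N'$ soient plats, alors $(M'\otimes N)\cap(M\otimes N')=M'\otimes N'$ dans $M\otimes N$ ; il résulte d'une chasse au diagramme, la platitude garantissant l'exactitude des suites obtenues par produit tensoriel.

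Je traiterais d'abord le point 3 (pour $1\leq t\leq n$, le cas $t=0$ étant dégénéré). Comme la filtration $(A_i)$ est décroissante, pour $i<t$ on a $A_i\otimes B_{n-i}\subseteq A\otimes B_{n-t+1}$, d'où $\sum_{i<t}A_i\otimes B_{n-i}\subseteq A\otimes B_{n-t+1}$. En intersectant avec $A_t\otimes B_{n-t}$ et en appliquant le lemme ci-dessus dans $A\otimes B_{n-t}$ (avec $M=A$, $N=B_{n-t}$, $M'=A_t$, $N'=B_{n-t+1}$), on obtient $(A\otimes B_{n-t+1})\cap(A_t\otimes B_{n-t})=A_t\otimes B_{n-t+1}$, d'où l'inclusion $\subseteq$ cherchée. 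L'inclusion réciproque provient du terme $i=t-1$ de la somme, qui contient $A_t\otimes B_{n-t+1}$ puisque $A_t\subseteq A_{t-1}$. La platitude identifie enfin ce groupe à $\I^t(G)\otimes\I^{n-t+1}(H)$.

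Pour le point 1, je filtrerais $C^n$ par les sous-modules croissants $\Psi_t:=\sum_{i<t}A_i\otimes B_{n-i}$ (de sorte que $\Psi_0=0$ et $\Psi_{n+1}=C^n$) : le point 3 fournit des isomorphismes $\Psi_{t+1}/\Psi_t\simeq A_t\otimes(B_{n-t}/B_{n-t+1})$. Il reste à passer au quotient par $C^{n+1}$. Les relations supplémentaires intervenant au cran $t$ proviennent de $A_{t+1}\otimes B_{n-t}$ et $A_t\otimes B_{n-t+1}$, tous deux contenus dans $C^{n+1}$ ; un calcul d'intersections analogue, reposant à nouveau sur le lemme clé, doit montrer que le sous-quotient correspondant de $C^n/C^{n+1}$ vaut $(A_t\otimes B_{n-t})/(A_{t+1}\otimes B_{n-t}+A_t\otimes B_{n-t+1})\simeq(A_t/A_{t+1})\otimes(B_{n-t}/B_{n-t+1})$. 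En sommant sur $t$, on obtient l'isomorphisme naturel annoncé. C'est précisément ce dévissage --- vérifier que le gradué de la filtration totale est la somme directe des gradués bigradués, sans interférence entre les crans --- qui constitue le principal point technique, et tout y repose sur la platitude.

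Enfin, le point 2 se déduit du point 1 : on a ${\rm gr}(\mathbb{Z}[G\times H])\simeq{\rm gr}(\mathbb{Z}[G])\otimes{\rm gr}(\mathbb{Z}[H])$, et le produit tensoriel sur $\mathbb{Z}$ de deux groupes abéliens sans torsion, donc plats, est plat, donc sans torsion.
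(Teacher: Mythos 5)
Votre démonstration est correcte et suit essentiellement la même voie que celle de l'article : l'hypothèse de torsion nulle donne la platitude sur $\mathbb{Z}$ de tous les groupes en jeu, ce qui identifie $\I^i(G)\otimes\I^j(H)$ à son image $\I^i(G).\I^j(H)$ dans $\mathbb{Z}[G\times H]$ et permet le calcul des intersections (votre lemme clé est la forme générale de la formule d'intersection que l'article se contente d'énoncer pour deux termes), le reste s'obtenant par le même dévissage. Le seul point que vous laissez en suspens --- l'absence d'interférence entre les crans de la filtration modulo $C^{n+1}$ --- se vérifie effectivement par le même argument de platitude, en remarquant que $\Psi_t$ et $C^{n+1}$ sont tous deux contenus dans $A_{t+1}\otimes B+A\otimes B_{n-t+1}$ (noyau de $A\otimes B\to (A/A_{t+1})\otimes (B/B_{n-t+1})$), dont l'intersection avec $A_t\otimes B_{n-t}$ vaut exactement $A_{t+1}\otimes B_{n-t}+A_t\otimes B_{n-t+1}\subseteq C^{n+1}$.
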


\begin{proof}
 La proposition~\ref{prp1} procure (sans aucune hypothèse sur $G$ ni $H$) un épimorphisme naturel
$$\underset{i+j=n}{\bigoplus}(\I^i/\I^{i+1})(G)\otimes (\I^j/\I^{j+1})(H)\twoheadrightarrow (\I^n/\I^{n+1})(G\times H).$$

Par ailleurs, grâce à l'hypothèse faite sur $G$ et $H$, les épimorphismes ca\-no\-niques $\I^i(G)\otimes\I^j(H)\twoheadrightarrow \I^i(G).\I^j(H)$ sont des isomorphismes, et l'on a
$$\I^i(G).\I^{n-i}(H)\cap \I^j(G).\I^{n-j}(H)=\I^j(G).\I^{n-i}(H)$$
pour $i\leq j$. On en déduit immédiatement la proposition.
\end{proof}

\section{Démonstration du théorème~\ref{thf}}\label{sd}

\paragraph*{Les classes $\T_n$ et $\T'_n$}

Pour tout $n\in\mathbb{N}$, notons $\T_n$ la classe des objets $F$ de $\F(\mathbf{gr})$ tels que ${\rm Ext}^*_{\F(\mathbf{gr})}(F,A)=0$ pour tout objet $A$ de $\F_n(\mathbf{gr})$. Nous utiliserons également la classe $\T'_n$ des objets $F$ de $\F(\mathbf{gr})$ possédant une résolution projective dont les termes sont des sommes directes de foncteurs du type $\bar{P}^{\otimes d}$ avec $d>n$. (On rappelle que $\bar{P}$ désigne la partie réduite du projectif $P^\mathbf{gr}_\mathbb{Z}$ ; ce foncteur s'identifie à la restriction à $\mathbf{gr}$ de $\I$.)

 Ainsi, $\T_n\supset\T_{n+1}$ et $\T'_n\supset\T_{n+1}$ pour tout $n\in\mathbb{N}$, et $\T_0=\T'_0$ est constituée des foncteurs réduits. 

La propriété suivante est immédiate.

\begin{lm}\label{lmet}
 Soient $n\in\mathbb{N}$ et $0\to F\to G\to H\to 0$ une suite exacte courte de $\F(\mathbf{gr})$. Si deux des foncteurs $F$, $G$ et $H$ appartiennent à $\T_n$ (resp. $\T'_n$), alors il en est de même pour le troisième.

En particulier, si $A$ et $B$ sont deux sous-foncteurs d'un foncteur $F$ de $\F(\mathbf{gr})$, $A$ et $B$ appartenant à $\T_n$ (resp. $\T'_n$), alors le sous-foncteur $A+B$ de $F$ appartient à $\T_n$ (resp. $\T'_n$) si et seulement s'il en est de même pour $A\cap B$.

Plus généralement, si on dispose d'une suite exacte longue
$$\dots\to X_1\to X_0\to F\to 0$$
dans $\F(\mathbf{gr})$ avec tous les $X_i$ dans $\T_n$ (resp. $\T'_n$), alors $F$ appartient à $\T_n$ (resp. $\T'_n$).
\end{lm}

Les classes $\T'_n$ nous serviront par l'intermédiaire du résultat suivant.

\begin{pr}\label{pr-pira}
 Pour tout entier $n\in\mathbb{N}$, la classe $\T'_n$ est incluse dans $\T_n$.
\end{pr}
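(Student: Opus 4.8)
The plan is to compute the $\mathrm{Ext}$-groups directly from the special resolution furnished by the definition of $\T'_n$. Let $F$ be an object of $\T'_n$ and fix a projective resolution $P_\bullet\twoheadrightarrow F$ each of whose terms $P_i$ is a direct sum of functors $\bar{P}^{\otimes d}$ with $d>n$ (such functors are projective, being direct summands of $P^\mathbf{gr}_d\simeq(P^\mathbf{gr}_1)^{\otimes d}$, via the splitting $P^\mathbf{gr}_1\simeq\mathbb{Z}\oplus\bar{P}$ into constant and reduced parts). For any $A$ in $\F_n(\mathbf{gr})$, the graded group ${\rm Ext}^*_{\F(\mathbf{gr})}(F,A)$ is then the cohomology of the cochain complex ${\rm Hom}_{\F(\mathbf{gr})}(P_\bullet,A)$. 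Since ${\rm Hom}(-,A)$ carries direct sums to products, it suffices to establish the single vanishing
$${\rm Hom}_{\F(\mathbf{gr})}(\bar{P}^{\otimes d},A)=0\quad\text{whenever } d>n \text{ and } A\in\F_n(\mathbf{gr}),$$
for then every term of ${\rm Hom}(P_\bullet,A)$ vanishes, hence so does all its cohomology, and $F$ lies in $\T_n$.

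The heart of the argument is therefore the identification of ${\rm Hom}_{\F(\mathbf{gr})}(\bar{P}^{\otimes d},A)$ with a top cross-effect of $A$. First I would combine the splitting $P^\mathbf{gr}_1\simeq\mathbb{Z}\oplus\bar{P}$ with the isomorphisms $P^\mathbf{gr}_i\otimes P^\mathbf{gr}_j\simeq P^\mathbf{gr}_{i+j}$ to expand $P^\mathbf{gr}_d\simeq\bigoplus_{S\subseteq\{1,\dots,d\}}\bar{P}^{\otimes|S|}$. Applying ${\rm Hom}(-,A)$ and the Yoneda isomorphism ${\rm Hom}(P^\mathbf{gr}_d,A)\simeq A(\mathbb{Z}^{*d})$ yields a decomposition of $A(\mathbb{Z}^{*d})$ indexed by subsets $S$, whose $S$-summand is ${\rm Hom}(\bar{P}^{\otimes|S|},A)$. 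On the other hand, the definition of the cross-effects gives $A(\mathbb{Z}^{*d})\simeq\bigoplus_{S}{\rm cr}_{|S|}(A)(\mathbb{Z},\dots,\mathbb{Z})$. Matching the two decompositions — both induced by the idempotents built from the endomorphisms of $\mathbb{Z}^{*d}$ that collapse a set of coordinates onto the basepoint — identifies ${\rm Hom}(\bar{P}^{\otimes d},A)\simeq{\rm cr}_d(A)(\mathbb{Z},\dots,\mathbb{Z})$. As $A$ is polynomial of degree at most $n$, its $(n+1)$-st cross-effect vanishes, whence ${\rm cr}_d(A)=0$ for every $d>n$; this is exactly the vanishing required above.

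The main obstacle I anticipate is the bookkeeping in this matching step: one must check that the idempotent of $P^\mathbf{gr}_d$ cutting out the summand $\bar{P}^{\otimes d}$ corresponds, under Yoneda, precisely to the idempotent of $A(\mathbb{Z}^{*d})$ cutting out the top cross-effect ${\rm cr}_d(A)(\mathbb{Z},\dots,\mathbb{Z})$ rather than some lower summand — a compatibility of two a priori distinct decompositions that I would verify by tracking the basepoint-insertion maps factor by factor. Everything else is formal: the projectivity of the $\bar{P}^{\otimes d}$, the passage from sums to products under ${\rm Hom}(-,A)$, and the identification of ${\rm Ext}$ with the cohomology of the Hom-complex. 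Once the cross-effect identification is secured, the vanishing, and hence the inclusion $\T'_n\subseteq\T_n$, follow at once.
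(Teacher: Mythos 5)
Your proposal is correct and follows essentially the same route as the paper: reduce to the vanishing of ${\rm Hom}_{\F(\mathbf{gr})}(\bar{P}^{\otimes d},A)$ for $A$ polynomial of degree $<d$, then identify this group with the cross-effect $cr_d(A)(\mathbb{Z},\dots,\mathbb{Z})$ inside $A(\mathbb{Z}^{*d})\simeq{\rm Hom}(P^\mathbf{gr}_d,A)$. You merely spell out the reduction step and the matching of the two idempotent decompositions, which the paper leaves implicit.
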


\begin{proof}
 Il suffit de montrer que ${\rm Hom}_{\F(\mathbf{gr})}(\bar{P}^{\otimes d},F)=0$ lorsque $F$ est polynomial de degré strictement inférieur à $d$. Cela découle de ce que le facteur direct ${\rm Hom}_{\F(\mathbf{gr})}(\bar{P}^{\otimes d},F)$ de ${\rm Hom}_{\F(\mathbf{gr})}(P^\mathbf{gr}_d,F)\simeq F(\mathbb{Z}^{*d})$ s'identifie à l'effet croisé $cr_d(F)(\mathbb{Z},\dots,\mathbb{Z})$.
\end{proof}

%\begin{rem}
%On a en fait $\T'_n=\T_n$, mais nous n'en aurons pas usage.
%\end{rem}

L'intérêt des classes $\T'_n$ réside dans leur comportement agréable relativement aux produits tensoriels.

\begin{pr}\label{prpp}
 Si $F$ et $G$ appartiennent à $\T'_i$ et $\T'_j$ respectivement et que l'un de ces foncteurs prend des valeurs sans torsion sur $\mathbb{Z}$, alors $F\otimes G$ appartient à $\T'_{i+j+1}$.
\end{pr}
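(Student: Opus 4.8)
Le plan est de fabriquer directement, à partir des résolutions de $F$ et de $G$, une résolution projective de $F\otimes G$ du type voulu. Les hypothèses fournissent des résolutions projectives $P_\bullet\to F$ et $Q_\bullet\to G$ dont les termes sont des sommes directes de foncteurs $\bar{P}^{\otimes d}$ (avec $d>i$) et $\bar{P}^{\otimes e}$ (avec $e>j$) respectivement. Je formerais le complexe total $C_\bullet:={\rm Tot}(P_\bullet\otimes Q_\bullet)$ du bicomplexe déduit du produit tensoriel de $\F(\mathbf{gr})$. Comme ce produit commute aux sommes directes et que $\bar{P}^{\otimes d}\otimes\bar{P}^{\otimes e}\simeq\bar{P}^{\otimes(d+e)}$, chaque terme de $C_\bullet$ est une somme directe de foncteurs $\bar{P}^{\otimes m}$ avec $m=d+e\geq (i+1)+(j+1)>i+j+1$ ; ces foncteurs sont de surcroît projectifs, puisque $\bar{P}^{\otimes m}$ est facteur direct de $(P^\mathbf{gr}_1)^{\otimes m}\simeq P^\mathbf{gr}_m$. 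Il ne resterait donc qu'à établir que $C_\bullet$ est une résolution de $F\otimes G$ pour conclure que $F\otimes G\in\T'_{i+j+1}$.

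Pour cette vérification, je raisonnerais au but : les suites exactes et les sommes directes se testant objet par objet dans $\F(\mathbf{gr})$, il suffit de contrôler, pour chaque $n$, l'homologie du complexe $C_\bullet(\mathbb{Z}^{*n})\simeq{\rm Tot}\big(P_\bullet(\mathbb{Z}^{*n})\otimes_{\mathbb{Z}}Q_\bullet(\mathbb{Z}^{*n})\big)$. L'observation décisive est que les foncteurs $\bar{P}^{\otimes d}$ prennent des valeurs libres sur $\mathbb{Z}$ : en effet $\bar{P}(\mathbb{Z}^{*n})=\I(\mathbb{Z}^{*n})$ est facteur direct du groupe abélien libre $\mathbb{Z}[\mathbb{Z}^{*n}]$, donc libre, ainsi que ses puissances tensorielles. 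Par conséquent $P_\bullet(\mathbb{Z}^{*n})$ et $Q_\bullet(\mathbb{Z}^{*n})$ sont des complexes de groupes abéliens libres résolvant respectivement $F(\mathbb{Z}^{*n})$ et $G(\mathbb{Z}^{*n})$, et l'homologie de leur produit tensoriel total s'identifie, par la formule de Künneth, à ${\rm Tor}^{\mathbb{Z}}_*(F(\mathbb{Z}^{*n}),G(\mathbb{Z}^{*n}))$.

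C'est exactement à ce stade qu'intervient l'hypothèse, et c'est là que réside l'unique difficulté. Comme $\mathbb{Z}$ est de dimension homologique $1$, les seuls degrés susceptibles de contribuer sont $0$ et $1$ ; or l'un des groupes $F(\mathbb{Z}^{*n})$, $G(\mathbb{Z}^{*n})$ étant sans torsion, donc plat sur $\mathbb{Z}$, le terme ${\rm Tor}^{\mathbb{Z}}_1$ s'annule. Il subsiste seulement $H_0(C_\bullet(\mathbb{Z}^{*n}))\simeq F(\mathbb{Z}^{*n})\otimes_{\mathbb{Z}}G(\mathbb{Z}^{*n})=(F\otimes G)(\mathbb{Z}^{*n})$, de sorte que $C_\bullet$ est bien la résolution projective cherchée. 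Sans l'hypothèse de non-torsion, $C_\bullet$ présenterait une homologie parasite en degré $1$ et cesserait d'être une résolution ; tout le reste de l'argument (comptabilité des degrés et projectivité des termes) est de pure routine.
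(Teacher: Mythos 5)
Votre démonstration est correcte et suit exactement la même voie que l'article, dont la preuve se réduit à la phrase « Cela découle de la formule de Künneth » ; vous en explicitez fidèlement tous les détails (complexe total, projectivité des $\bar{P}^{\otimes m}$, comptage des degrés, annulation du $\mathrm{Tor}_1$ grâce à l'hypothèse de non-torsion).
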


\begin{proof}
 Cela découle de la formule de Künneth.
\end{proof}

\begin{cor}\label{corpt}
 Le produit tensoriel de $n+1$ foncteurs réduits de $\F(\mathbf{gr})$ dont au moins $n$ prennent des valeurs sans torsion sur $\mathbb{Z}$ appartient à $\T'_n$.
\end{cor}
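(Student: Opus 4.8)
The plan is to prove Corollaire~\ref{corpt} by induction on $n$, using Proposition~\ref{prpp} as the inductive engine. First I would dispose of the base case $n=0$: the claim is that a single reduced functor belongs to $\T'_0$, which is precisely the assertion (already noted in the excerpt) that $\T'_0$ consists of all reduced functors. Indeed, any reduced functor admits a projective resolution whose terms are direct sums of $\bar{P}^{\otimes d}$ with $d>0$, since the reduced projective generators of $\F(\mathbf{gr})$ are exactly the $\bar{P}^{\otimes d}$ for $d\geq 1$ (the functor $P^\mathbf{gr}_d\simeq (P^\mathbf{gr}_1)^{\otimes d}$ decomposes, via the splitting into constant and reduced parts, into summands $\bar{P}^{\otimes e}$ with $e\leq d$, the reduced ones being those with $e\geq 1$). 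So $n=0$ holds.

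For the inductive step, suppose the result holds for $n-1$, and let $F_1,\dots,F_{n+1}$ be reduced functors of which at least $n$ take torsion-free values over $\mathbb{Z}$. I would group them as $(F_1\otimes\dots\otimes F_n)\otimes F_{n+1}$. Here I must arrange the bookkeeping so that the inductive hypothesis applies to the first factor: among $F_1,\dots,F_n$ at least $n-1$ are torsion-free (we may discard at most one torsion-free-failing functor, placing a genuinely torsion-free one in the position $F_{n+1}$ if necessary), so by induction $F_1\otimes\dots\otimes F_n$ lies in $\T'_{n-1}$. The factor $F_{n+1}$ is reduced, hence in $\T'_0$ by the base case. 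Moreover, by the reordering at least one of the two factors takes torsion-free values: if $F_{n+1}$ was chosen torsion-free this is clear, and otherwise all of $F_1,\dots,F_n$ are torsion-free so their tensor product is too (torsion-freeness over $\mathbb{Z}$ is preserved by the tensor product $\otimes$ computed valuewise over $\mathbb{Z}$, the issue being precisely the $\mathrm{Tor}_1$ term that vanishes when one factor is flat).

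Now Proposition~\ref{prpp} applies with $i=n-1$, $j=0$: since $F_1\otimes\dots\otimes F_n\in\T'_{n-1}$, $F_{n+1}\in\T'_0$, and one of them is torsion-free valued, the product $(F_1\otimes\dots\otimes F_n)\otimes F_{n+1}$ lies in $\T'_{(n-1)+0+1}=\T'_n$. This is exactly the claim. The only subtle point, which I regard as the main thing to get right rather than a deep obstacle, is the torsion-freeness bookkeeping: I must ensure that in splitting off $F_{n+1}$ the hypothesis ``at least $n$ of the $n+1$ factors are torsion-free'' propagates to ``at least $n-1$ of the first $n$ factors are torsion-free and at least one of the two grouped factors is torsion-free,'' so that both the inductive hypothesis and the hypothesis of Proposition~\ref{prpp} are met. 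Since one is free to permute the factors (the tensor product is symmetric), one simply places a torsion-free factor in the last slot whenever the unique possibly-torsion factor is among $F_1,\dots,F_n$; the accounting then closes and the induction goes through.
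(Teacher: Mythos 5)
Your argument is correct and is exactly the intended one: the paper states Corollaire~\ref{corpt} without a written proof, as an immediate consequence of Proposition~\ref{prpp}, and the induction on $n$ you run (base case: $\T'_0$ is the class of reduced functors; inductive step: apply Proposition~\ref{prpp} with $i=n-1$, $j=0$) is precisely how it follows. The torsion-freeness bookkeeping is the only point requiring care, and you handle it correctly.
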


\begin{rem}
%\begin{enumerate}
% \item L'hypothèse de liberté des valeurs est en fait superflue.
%\item
Ce corollaire (dans lequel on pourrait remplacer, comme dans ce qui précède, $\mathbf{gr}$ par n'importe quelle petite catégorie pointée avec coproduits finis) est un analogue d'un résultat dû au deuxième auteur pour les catégories $\F(\A)$, où $\A$ est additive, qui apparaît dans \cite{P-add}, et qui s'est avéré extrêmement utile en cohomologie des foncteurs.
%\end{enumerate}
\end{rem}

\paragraph*{Les foncteurs $K^d_n$}

Pour tout $d\in\mathbb{N}$, le foncteur d'inclusion $\F_d(\mathbf{gr})\to\F(\mathbf{gr})$ possède un adjoint à gauche $q_d$ ; $q_d(F)$ est le plus grand quotient de $F$ appartenant à $\F_d(\mathbf{gr})$. On renvoie à \cite{HPV}, §\,2.3 pour plus de détails à ce sujet.

\begin{nota}
 Étant donné deux entiers naturels $n$ et $d$, on pose
$$K^d_n={\rm Ker} (P^\mathbf{gr}_n\twoheadrightarrow q_d(P^\mathbf{gr}_n)).$$
\end{nota}

\begin{pr}\label{ida}
 Pour tous $n, d\in\mathbb{N}$ et tout objet $G$ de $\mathbf{gr}$, on dispose d'un isomorphisme naturel
$$K^d_n(G)\simeq\I^{d+1}(G^n).$$
\end{pr}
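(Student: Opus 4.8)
Le plan est d'identifier $q_d(P^\mathbf{gr}_n)$ au foncteur de Passi $G\mapsto\mathbb{Z}[G^n]/\I^{d+1}(G^n)$. Comme $P^\mathbf{gr}_n(G)=\mathbb{Z}[\mathbf{gr}(\mathbb{Z}^{*n},G)]=\mathbb{Z}[G^n]$ (un morphisme $\mathbb{Z}^{*n}\to G$ n'étant autre qu'un $n$-uplet d'éléments de $G$), le sous-groupe $\I^{d+1}(G^n)$ définit un sous-foncteur de $P^\mathbf{gr}_n$ --- naturel puisque $\I^{d+1}$ est fonctoriel sur $\mathbf{Grp}$ et $G\mapsto G^n$ sur $\mathbf{gr}$. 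La proposition résultera alors des deux inclusions $K^d_n\subseteq\I^{d+1}(-^n)$ et $\I^{d+1}(-^n)\subseteq K^d_n$ entre sous-foncteurs de $P^\mathbf{gr}_n$.

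Pour la première, on montrerait que le quotient $\mathbb{Z}[-^n]/\I^{d+1}(-^n)$ est polynomial de degré au plus $d$ : par la propriété universelle de $q_d$ (adjoint à gauche de l'inclusion), la projection $P^\mathbf{gr}_n\twoheadrightarrow\mathbb{Z}[-^n]/\I^{d+1}(-^n)$ se factoriserait par $q_d(P^\mathbf{gr}_n)$, d'où $K^d_n\subseteq\I^{d+1}(-^n)$. Pour établir cette polynomialité, on munirait ce quotient de la filtration finie induite par les puissances de l'idéal d'augmentation, de sous-quotients les $(\I^r/\I^{r+1})(-^n)$, $0\leq r\leq d$. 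Les objets de $\mathbf{gr}$ étant libres, la proposition~\ref{prgp}(1) itérée et le théorème de Magnus identifient $(\I^r/\I^{r+1})(G^n)$ à une somme directe de copies de $G_{ab}^{\otimes r}$ ; or $G_{ab}$ est additif (l'abélianisation transforme produit libre en somme directe), donc $G_{ab}^{\otimes r}$ est polynomial de degré $r\leq d$ (le produit tensoriel additionne les degrés). Chaque sous-quotient appartenant ainsi à $\F_d(\mathbf{gr})$, qui est épaisse donc stable par extensions, l'objet filtré y appartient aussi.

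Pour la seconde inclusion, il suffirait de vérifier que toute transformation naturelle $\phi\colon P^\mathbf{gr}_n\to F$ avec $F$ dans $\F_d(\mathbf{gr})$ annule $\I^{d+1}(-^n)$ ; l'appliquer à $F=q_d(P^\mathbf{gr}_n)$ et à la projection canonique donnerait $\I^{d+1}(-^n)\subseteq K^d_n$. Par Yoneda, $\phi$ correspond à $x\in F(\mathbb{Z}^{*n})$ avec $\phi_G([u])=F(u)(x)$. Un générateur de $\I^{d+1}(G^n)$ est $(u_1-1)\cdots(u_{d+1}-1)$, $u_i\in G^n$ ; notant $u\colon\mathbb{Z}^{*n(d+1)}=(\mathbb{Z}^{*n})^{*(d+1)}\to G$ le morphisme de $i$-ème composante $u_i$ et $j_S\colon\mathbb{Z}^{*n}\to\mathbb{Z}^{*n(d+1)}$ celui envoyant chaque générateur $e$ sur $\prod_{i\in S}e^{(i)}$, on a $u\circ j_S=\prod_{i\in S}u_i$, d'où
$$\phi_G\big((u_1-1)\cdots(u_{d+1}-1)\big)=F(u)\Big(\sum_{S\subseteq\{1,\dots,d+1\}}(-1)^{d+1-|S|}F(j_S)(x)\Big).$$
En introduisant enfin $\mu\colon\mathbb{Z}^{*n}\to\mathbb{Z}^{*n(d+1)}$, $e\mapsto\prod_{i=1}^{d+1}e^{(i)}$, et les rétractions $p_T$ de $\mathbb{Z}^{*n(d+1)}$ sur les copies indexées par $T$, on a $p_T\circ\mu=j_T$, si bien que la somme alternée est l'image de $F(\mu)(x)$ par l'idempotent standard (\cite{HPV}, §\,2) projetant $F(\mathbb{Z}^{*n(d+1)})$ sur l'effet croisé $cr_{d+1}(F)(\mathbb{Z}^{*n},\dots,\mathbb{Z}^{*n})$. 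Celui-ci étant nul car $F$ est de degré au plus $d$, l'expression s'annule.

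Le point délicat est la seconde inclusion : il faut agencer correctement les morphismes $u$, $j_S$, $\mu$, $p_T$ entre produits libres et reconnaître la somme alternée comme image de l'idempotent d'effet croisé. La première inclusion est plus directe mais utilise de façon essentielle que $\mathbf{gr}$ ne contient que des groupes libres, ce qui rend applicables la proposition~\ref{prgp} et le théorème de Magnus pour identifier les sous-quotients de la filtration.
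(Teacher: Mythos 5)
Votre preuve est correcte et suit pour l'essentiel la même stratégie que celle de l'article : même double inclusion entre sous-foncteurs de $P^\mathbf{gr}_n$, même argument de filtration par les $(\I^r/\I^{r+1})(-^n)$ pour la polynomialité du quotient (l'article se contente d'ailleurs de la surjection depuis $((G^n)_{ab})^{\otimes r}$, sans avoir besoin de Magnus), et même calcul reconnaissant la somme alternée sur les sous-ensembles de $\{1,\dots,d+1\}$ comme l'idempotent d'effet croisé $cr_{d+1}$, nul sur $\F_d(\mathbf{gr})$. Votre formulation de la seconde inclusion via Yoneda est simplement la version duale de la transformation naturelle $\kappa_d(P^\mathbf{gr}_n)$ utilisée dans l'article, appliquée aux mêmes générateurs $(u_1-1)\cdots(u_{d+1}-1)$ de $\I^{d+1}(G^n)$.
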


\begin{proof}
 Notons $J^d_n$ le sous-foncteur de $P^\mathbf{gr}_n$ donné par $G\mapsto \I^{d+1}(G^n)\subset\mathbb{Z}[G^n]\simeq P^\mathbf{gr}_n(G)$. Alors le foncteur $P^\mathbf{gr}_n/J^d_n$ appartient à $\F_d(\mathbf{gr})$. Cela résulte de ce que, pour tout $r\in\mathbb{N}$, le foncteur $G\mapsto (\I^r/\I^{r+1})(G^n)$ (des groupes vers les groupes abéliens) est polynomial de degré (au plus) $r$, puisque c'est un quotient de $G\mapsto (G_{ab}^n)^{\otimes r}$ (voir le début de la section~\ref{sp}). On a donc $K^d_n\subset J^d_n$.

Montrons l'inclusion inverse. Pour tout foncteur $F$ de $\F(\mathbf{gr})$ et tout objet $G$ de $\mathbf{gr}$, on considère l'application naturelle
$$\kappa_d(F)(G)=\underset{I\subset\{0,1,\dots,d\}}{\sum}(-1)^{{\rm Card}(I)}F(p_I) : F(G^{* (d+1)})\to F(G),$$
où $p_I\in\mathbf{gr}(G^{* (d+1)},G)\simeq {\rm End}(G)^{d+1}$ est le morphisme dont la $i$-ème composante est l'identité si $i\in I$ et le morphisme trivial sinon. La transformation naturelle $\kappa_d(F)$ est nulle lorsque $F$ appartient à $\F_d(\mathbf{gr})$, car elle se factorise par l'idempotent de $F(G^{* (d+1)})$ dont l'image est par définition l'effet croisé $cr_{d+1}(F)(G,\dots,G)$.

Par conséquent, comme $q_d(F)$ appartient à $\F_d(\mathbf{gr})$, la composée
$$F(G^{* (d+1)})\xrightarrow{\kappa_d(F)(G)}F(G)\twoheadrightarrow q_d(F)(G)$$
est nulle, de sorte que le noyau de $\kappa_d(F)$ contient\,\footnote{Il y a en fait égalité --- cf. \cite{HPV}, définition~2.16 et proposition~2.17 ---, mais nous n'en aurons pas besoin.} l'image de $\kappa_d$. En particulier, $K^n_d$ contient l'image de $\kappa_d(P^\mathbf{gr}_n)$. Si $(g^i_j)_{1\leq i\leq n, 0\leq j\leq d}$ est une famille d'éléments d'un groupe libre de rang fini $G$, on a
$$\kappa_d(P^\mathbf{gr}_n)(G)\big([(g^i_0*\dots*g^i_d)_{1\leq i\leq n}]\big)=\underset{0\leq j_1<\dots<j_r\leq d}{\sum}(-1)^{r}[(g^i_{j_1}\dots g^i_{j_r})_{1\leq i\leq n}]$$
qui est égal au produit
$$a_0 a_1\dots a_d\quad\text{où}\quad a_j:=[(g^i_j)_{1\leq i\leq n}]-[1]\in\I(G^n)\subset\mathbb{Z}[G^n].$$
Comme $\I^{d+1}(G^n)$ est engendré par ces produits, cela montre que $K^n_d$ contient $J^d_n$, d'où la conclusion.
\end{proof}

\begin{lm}\label{lmr1}
 Pour tout $d\in\mathbb{N}$, le foncteur $K^d_1$ appartient à $\T'_d$.
\end{lm}

\begin{proof}
 On procède par récurrence sur $d$. Pour $d=0$, c'est clair. 

Compte-tenu de la proposition~\ref{ida}, la proposition~\ref{pr-seb} peut se traduire par l'existence de suites exactes
$$\dots\to\bar{P}^{\otimes (n+1)}\otimes K^{d-1}_1\to\bar{P}^{\otimes n}\otimes K^{d-1}_1\to\dots\to\bar{P}^{\otimes 2}\otimes K^{d-1}_1\to\bar{P}\otimes K^{d-1}_1\to K^d_1\to 0$$
dans $\F(\mathbf{gr})$. Si $K^{d-1}_1$ appartient à $\T'_{d-1}$, alors $\bar{P}^{\otimes i}\otimes K^{d-1}_1$ appartient à $\T'_d$ pour tout entier $i\geq 1$, donc $K^d_1$ aussi (en utilisant le lemme~\ref{lmet}), d'où le lemme.
\end{proof}

\begin{pr}\label{prpf}
 Pour tous $d, n\in\mathbb{N}$, le foncteur $K^d_n$ appartient à $\T'_d$.
\end{pr}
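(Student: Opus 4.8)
The plan is to argue by induction on $n$, the case $n=1$ being exactly le lemme~\ref{lmr1}. For the inductive step I would use the splitting $G^n=G\times G^{n-1}$ together with the isomorphism $K^d_n(G)\simeq\I^{d+1}(G^n)$ of la proposition~\ref{ida}, and filter $\I^{d+1}(G\times G^{n-1})$ by the natural subfunctors
$$F^t : G\longmapsto \sum_{\substack{i+j=d+1\\ i<t}}\I^i(G).\I^j(G^{n-1}),\qquad 0\leq t\leq d+2,$$
so that $F^0=0$ and $F^{d+2}=K^d_n$ by la proposition~\ref{prp1}. The heart of the step is the identification of the successive quotients. Since $G$ and $G^{n-1}$ have torsion-free associated graded rings (proposition~\ref{prgp}, point~2), the intersection formula of la proposition~\ref{prgp}, point~3, applies with $H=G^{n-1}$ and $m=d+1$, giving $F^t\cap\big(\I^t(G).\I^{d+1-t}(G^{n-1})\big)=\I^t(G).\I^{d+2-t}(G^{n-1})$, whence a natural isomorphism
$$F^{t+1}/F^t\simeq \I^t(G)\otimes (\I^{d+1-t}/\I^{d+2-t})(G^{n-1}),$$
the last identification using that $\I^t(G)$ is $\mathbb{Z}$-flat, so that $\I^t(G)\otimes(-)$ passes through the quotient.

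It then remains to place each subquotient in $\T'_d$ and to reassemble. For the second factor, the short exact sequence $0\to K^r_{n-1}\to K^{r-1}_{n-1}\to\big(G\mapsto(\I^r/\I^{r+1})(G^{n-1})\big)\to 0$ (again via la proposition~\ref{ida}) together with the induction hypothesis $K^r_{n-1}\in\T'_r\subset\T'_{r-1}$ and $K^{r-1}_{n-1}\in\T'_{r-1}$ shows, by le lemme~\ref{lmet}, that the functor $G\mapsto(\I^r/\I^{r+1})(G^{n-1})$ lies in $\T'_{r-1}$ for every $r\geq 1$. In the middle range $1\leq t\leq d$ one has $\I^t(G)=K^{t-1}_1\in\T'_{t-1}$ by le lemme~\ref{lmr1} and the second factor (with $r=d+1-t$) lies in $\T'_{d-t}$, both taking torsion-free values; la proposition~\ref{prpp} then puts the tensor product in $\T'_{(t-1)+(d-t)+1}=\T'_d$. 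The two extreme indices are treated directly: for $t=d+1$ the quotient is $\I^{d+1}(G)\otimes\mathbb{Z}\simeq K^d_1\in\T'_d$ by le lemme~\ref{lmr1}, while for $t=0$ it is $(\mathbb{Z}\oplus\bar{P})\otimes\big(G\mapsto(\I^{d+1}/\I^{d+2})(G^{n-1})\big)$, whose two summands lie in $\T'_d$ and (via $\bar{P}=K^0_1\in\T'_0$ and la proposition~\ref{prpp}) in $\T'_{d+1}\subset\T'_d$. Feeding the short exact sequences $0\to F^t\to F^{t+1}\to F^{t+1}/F^t\to 0$ into le lemme~\ref{lmet} and inducting on $t$ from $F^0=0$ yields $K^d_n=F^{d+2}\in\T'_d$.

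The step I expect to be the main obstacle is the subquotient identification: one must verify that the $F^t$ are genuine subfunctors of $K^d_n$, that la proposition~\ref{prgp}, point~3, applies verbatim with $H=G^{n-1}$ (which is precisely where point~2 of the same proposition is needed, to know ${\rm gr}(\mathbb{Z}[G^{n-1}])$ is torsion-free), and that the passage from $\I^t(G).\I^j(G^{n-1})$ to $\I^t(G)\otimes\I^j(G^{n-1})$ and then through the quotient by $\I^t(G).\I^{j+1}(G^{n-1})$ is natural in $G$ and exact. Once these naturality and flatness points are secured, the rest is bookkeeping with the stability of the classes $\T'_\bullet$ under extensions (le lemme~\ref{lmet}) and under tensor products (la proposition~\ref{prpp}).
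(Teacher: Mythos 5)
Your proof follows essentially the same route as the paper's: induction on $n$, reduction to $n-1$ via $G^n=G\times G^{n-1}$ and la proposition~\ref{ida}, the filtration of $\I^{d+1}(G\times G^{n-1})$ by the partial sums indexed by $t$, and the intersection formula de la proposition~\ref{prgp} combined with les propositions~\ref{prpp} et~\ref{lmet}. The only difference of execution is that the paper never identifies the graded pieces $F^{t+1}/F^t$: it uses directly la forme <<~somme de deux sous-foncteurs~>> du lemme~\ref{lmet}, observant que chaque nouveau terme $K^{t-1}_{n-1}\otimes K^{d-t}_1$ est dans $\T'_d$ et que son intersection avec la somme partielle précédente, $K^{t-1}_{n-1}\otimes K^{d+1-t}_1$, est dans $\T'_{d+1}\subset\T'_d$. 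Cela évite votre lemme intermédiaire plaçant $G\mapsto(\I^r/\I^{r+1})(G^{n-1})$ dans $\T'_{r-1}$ (lemme correct, et correctement justifié par la suite exacte $0\to K^r_{n-1}\to K^{r-1}_{n-1}\to(\I^r/\I^{r+1})(G^{n-1})\to 0$), mais les deux variantes sont à distance nulle l'une de l'autre.

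Une erreur locale à corriger : en $t=0$, la formule d'intersection de la proposition~\ref{prgp} dégénère (le membre de gauche est nul, pas le membre de droite), et le quotient $F^1/F^0=F^1$ est $\mathbb{Z}[G]\otimes\I^{d+1}(G^{n-1})\simeq(\mathbb{Z}\oplus\bar{P})\otimes K^d_{n-1}$, et non $(\mathbb{Z}\oplus\bar{P})\otimes\big(G\mapsto(\I^{d+1}/\I^{d+2})(G^{n-1})\big)$ comme vous l'écrivez. La conclusion pour ce terme subsiste, et même plus simplement : $K^d_{n-1}\in\T'_d$ par hypothèse de récurrence et $\bar{P}\otimes K^d_{n-1}\in\T'_{d+1}\subset\T'_d$ par la proposition~\ref{prpp}. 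Le reste de l'argument est correct.
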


\begin{proof}
 On montre par récurrence sur $n$ que $K^d_n$ appartient à $\T'_d$ pour tout $d$. Pour $n=0$ il n'y a rien à faire ; compte-tenu du lemme précédent, on peut supposer $n>1$ et l'assertion établie pour $n-1$.

Étant donné $d\in\mathbb{N}$, on observe que, pour tout $t\in\{1,\dots,d+2\}$, le foncteur
$$\underset{i<t}{\underset{i+j=d+1}{\sum}}K^{i-1}_{n-1}.K^{j-1}_1\subset P^\mathbf{gr}_n$$
(où l'on conserve les notations de la fin de la section~\ref{sp} ; on fait sans cesse usage de l'identification canonique $P^\mathbf{gr}_n(G)=\mathbb{Z}[G^n]$ et de la proposition~\ref{ida}) appartient à $\T'_d$. En effet, $K^a_{n-1}. K^b_1\simeq K^a_{n-1}\otimes K^b_1$ appartient à $\T'_{a+b+1}$ par la proposition~\ref{prpp} et l'hypothèse de récurrence. On obtient donc l'assertion par récurrence sur $t$ en utilisant la dernière partie de la proposition~\ref{prgp} et le lemme~\ref{lmet}. En prenant $t=d+2$ et en utilisant la proposition~\ref{prp1}, on obtient bien que $K^d_n$ appartient à $\T'_d$, d'où la conclusion.
\end{proof}

\begin{proof}[Démonstration du théorème~\ref{thf}] Comme les foncteurs $P^\mathbf{gr}_n$ forment un ensemble de générateurs projectifs de $\F(\mathbf{gr})$ lorsque $n$ décrit $\mathbb{N}$, pour tout $d\in\mathbb{N}$ fixé, les foncteurs $q_d(P^\mathbf{gr}_n)$ forment un ensemble de générateurs projectifs de $\F_d(\mathbf{gr})$ (car $q_d$ est adjoint à gauche au foncteur exact d'inclusion). Par conséquent, il suffit de montrer le théorème~\ref{thf} pour $F=q_d(P^\mathbf{gr}_n)$, auquel cas il équivaut à la nullité de ${\rm Ext}^*_{\F(\mathbf{gr})}(K^d_n,G)$ (puisque ${\rm Ext}^*_{\F(\mathbf{gr})}(P^\mathbf{gr}_n,G)$ et ${\rm Ext}^*_{\F_d(\mathbf{gr})}(q_d(P^\mathbf{gr}_n),G)$ sont nuls en degré cohomologique strictement positif), lorsque $G$ appartient à $\F_d(\mathbf{gr})$. Autrement dit, il s'agit de montrer que $K^d_n$ appartient à $\T_d$. Cela découle des propositions~\ref{prpf} et~\ref{pr-pira}.
\end{proof}

\begin{rem}
 \begin{enumerate}
  \item Signalons une méthode alternative (qui repose toutefois exactement sur les mêmes idées) pour démontrer le théorème. On remarque d'abord qu'il suffit de l'établir lorsque $F$ est une puissance tensorielle du foncteur d'abélianisation, grâce aux résultats de structure des foncteurs polynomiaux sur $\mathbf{gr}$ (cf. \cite{DV2} §\,2). Ces foncteurs possèdent une résolution projective explicite obtenue en prenant la puissance tensorielle appropriée de la résolution bar (décalée) --- voir (\ref{eqb}) ci-après. Le c\oe ur de l'approche alternative qu'on esquisse consiste à montrer qu'appliquer le foncteur $q_d$ à ces résolutions donne encore des complexes acycliques. Cela se fait par récurrence sur $d$ (en considérant le noyau de $q_d\twoheadrightarrow q_{d-1}$) et en utilisant les propriétés rappelées dans la section~\ref{sp} (ou des variantes).
\item Si $\C$ est une petite catégorie pointée avec coproduits finis, il est exceptionnel que l'inclusion de la sous-catégorie de foncteurs polynomiaux $\F_d(\C)$ dans $\F(\C)$ induise des isomorphismes entre groupes d'extensions. Le cas où $\C$ est additive a été complètement étudié : dès le degré cohomologique $i=3$, le morphisme canonique ${\rm Ext}^i_{\F_d(\C)}(F,G)\to {\rm Ext}^i_{\F(\C)}(F,G)$ peut cesser d'être un isomorphisme (même si $F$ et $G$ sont additifs et que l'on prend la colimite sur $d\in\mathbb{N}$). Dans les bons cas (avec une hypothèse de torsion bornée), ce morphisme est un isomorphisme lorsque $d$ est assez grand par rapport à $i$ et aux degrés de $F$ et $G$, mais pas en général. (Néanmoins, sans aucune hypothèse sur la catégorie additive $\C$, les inclusions $\F_d(\C;\mathbb{Q})\hookrightarrow\F(\C;\mathbb{Q})$ induisent des isomorphismes entre {\em tous} les groupes d'extensions.) Ces résultats sont établis par le deuxième auteur dans \cite{P-extor} dans un cas particulier ($F$ ou $G$ additif et pas de torsion dans les groupes abéliens de morphismes de $\C$) et par le premier auteur dans \cite{Dja-Pg} dans le cas général. Comme dans le présent travail, la démonstration de ces résultats nécessite d'analyser le gradué associé à la filtration de l'anneau d'un groupe (abélien, cette fois) par les puissances de l'idéal d'augmentation ; il faut toutefois aussi utiliser d'autres ingrédients, issus de la construction cubique de Mac Lane. Notons d'ailleurs que le c\oe ur de notre démonstration consiste à montrer que la restriction des foncteurs $\I^{d+1}$ aux groupes libres appartient à $\T_d$ ; l'argument central de \cite{P-extor} (sa proposition~4.3) consiste à montrer une propriété analogue pour la restriction des foncteurs $\I^{d+1}$ aux groupes {\em abéliens} libres, propriété qui n'est valide qu'en degré cohomologique assez petit (et qui repose sur un argument d'idéal {\em quasi-régulier}). 
\item Si $\Gamma$ désigne la catégorie des ensembles finis pointés, les inclusions $\F_d(\Gamma)\hookrightarrow\F(\Gamma)$ induisent des isomorphismes entre tous les groupes d'extensions. C'est une conséquence directe du théorème de type Dold-Kan établi par le deuxième auteur dans \cite{PDK}.
\item Notons $\mathbf{mon}$ (un squelette de) la catégorie des {\em monoïdes} libres de rang fini. Le théorème~\ref{thf} reste vrai si l'on remplace la catégorie source $\mathbf{gr}$ par $\mathbf{mon}$. Une méthode pour le voir consiste à reprendre les arguments du présent article et les adapter à la catégorie $\F(\mathbf{mon})$. Une autre consiste à utiliser le résultat dû à Hartl et aux deuxième et troisième auteurs (\cite{HPV}, {\em Corollary}~5.38) selon lequel le foncteur de complétion en groupe $\alpha : \mathbf{mon}\to\mathbf{gr}$ induit pour chaque $d\in\mathbb{N}$ une équivalence de catégories $\F_d(\mathbf{gr})\xrightarrow{\simeq}\F_d(\mathbf{mon})$. Dès lors, il suffit de voir que le foncteur $\alpha$ induit des isomorphismes entre groupes d'extensions
$${\rm Ext}^*_{\F(\mathbf{gr})}(F,G)\xrightarrow{\simeq} {\rm Ext}^*_{\F(\mathbf{mon})}(F\circ\alpha,G\circ\alpha)$$
lorsque $F$ et $G$ sont polynomiaux. C'est en fait vrai si l'on suppose seulement que $G$ est polynomial. Il suffit de le voir lorsque $F$ est un foncteur projectif $P^\mathbf{gr}_n$ ; on peut alors utiliser un argument classique reposant sur le fait que le morphisme canonique d'un monoïde {\em libre} vers sa complétion en groupe induit un isomorphisme en homologie. Cet argument est donné en détail, dans un contexte abélien analogue, dans \cite{Dja-JKT}, théorème~3.3.
 \end{enumerate}
\end{rem}

\section{Applications}\label{sa}

Notons $\mathfrak{a} : \mathbf{gr}\to\mathbf{Ab}$ le foncteur d'abélianisation. Ce foncteur joue un rôle fondamental dans la catégorie $\F(\mathbf{gr})$, où les calculs d'algèbre homologique sont grandement facilités par l'existence d'une résolution projective explicite de $\mathfrak{a}$. Celle-ci est donnée par la résolution bar (dont on tronque le degré nul : on utilise que l'homologie d'un groupe libre est naturellement isomorphe à son abélianisation en degré $1$ et nulle en degré $>1$), qui prend la forme :
\begin{equation}\label{eqb}
\dots\to P^\mathbf{gr}_{n+1}\to P^\mathbf{gr}_n\to\dots\to P^\mathbf{gr}_2\to P^\mathbf{gr}_1\to\mathfrak{a}\to 0.
\end{equation}
Cette résolution projective apparaît pour la première fois dans \cite{PJ} (§\,5.A) ; elle est également utilisée de façon fondamentale dans \cite{DV2}.

En utilisant la résolution bar {\em normalisée}, on obtient une variante de la précédente résolution, également utile :
\begin{equation}\label{eqbn}
\dots\to \bar{P}^{\otimes (n+1)}\to\bar{P}^{\otimes n}\to\dots\to\bar{P}^{\otimes 2}\to\bar{P}\to\mathfrak{a}\to 0.
\end{equation}

\begin{pr}\label{pr-dh}
 Soient $d\geq n>0$ des entiers. Le foncteur $\mathfrak{a}^{\otimes n}$ est de dimension homologique $d-n$ dans la catégorie $\F_d(\mathbf{gr})$ : on a ${\rm Ext}^i_{\F_d(\mathbf{gr})}(\mathfrak{a}^{\otimes n},-)=0$ pour $i>d-n$, tandis que le foncteur ${\rm Ext}^{d-n}_{\F_d(\mathbf{gr})}(\mathfrak{a}^{\otimes n},-)$ n'est pas nul.
\end{pr}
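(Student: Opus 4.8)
Le plan est de ramener tout le calcul à la catégorie $\F(\mathbf{gr})$. Comme $\mathfrak{a}^{\otimes n}$ est de degré $n\leq d$ et que tout but $G$ considéré appartient à $\F_d(\mathbf{gr})$, le théorème~\ref{thf} fournit des isomorphismes ${\rm Ext}^*_{\F_d(\mathbf{gr})}(\mathfrak{a}^{\otimes n},G)\simeq{\rm Ext}^*_{\F(\mathbf{gr})}(\mathfrak{a}^{\otimes n},G)$, de sorte qu'il suffit de travailler dans la catégorie de tous les foncteurs. J'y résoudrais $\mathfrak{a}^{\otimes n}$ par la puissance tensorielle $n$-ième $C_\bullet$ de la résolution bar normalisée~(\ref{eqbn}) : les foncteurs $\mathfrak{a}$ et $\bar P$ prenant des valeurs sans torsion sur $\mathbb{Z}$, la formule de Künneth garantit que $C_\bullet\to\mathfrak{a}^{\otimes n}$ est bien une résolution projective, dont le terme de degré $k$ est une somme directe de copies de $\bar P^{\otimes(k+n)}$ (indexée par les décompositions $k_1+\dots+k_n=k$).

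Pour la majoration de la dimension homologique, je calculerais ${\rm Ext}^*$ comme cohomologie du complexe ${\rm Hom}_{\F(\mathbf{gr})}(C_\bullet,G)$. Exactement comme dans la démonstration de la proposition~\ref{pr-pira}, le groupe ${\rm Hom}_{\F(\mathbf{gr})}(\bar P^{\otimes p},G)$ s'identifie au facteur direct $cr_p(G)(\mathbb{Z},\dots,\mathbb{Z})$ de $G(\mathbb{Z}^{*p})$, lequel est nul dès que $p>d$ puisque $G$ est de degré au plus $d$. Comme $C_k$ est bâti sur $\bar P^{\otimes(k+n)}$, le complexe ${\rm Hom}(C_\bullet,G)$ est concentré en degrés $0\leq k\leq d-n$, d'où l'annulation ${\rm Ext}^i_{\F_d(\mathbf{gr})}(\mathfrak{a}^{\otimes n},-)=0$ pour $i>d-n$.

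La partie délicate sera la non-nullité en degré $d-n$. On remarque d'abord qu'un but dans $\F_{d-1}(\mathbf{gr})$ ne peut pas la détecter : le théorème~\ref{thf} appliqué au rang $d-1$ force alors l'annulation au-delà du degré $d-1-n$. Il faut donc un but de degré exactement $d$, et je choisirais $G=\mathfrak{a}^{\otimes d}$. Le terme de degré $k$ de ${\rm Hom}(C_\bullet,\mathfrak{a}^{\otimes d})$ est alors une somme de copies de $cr_{k+n}(\mathfrak{a}^{\otimes d})(\mathbb{Z},\dots,\mathbb{Z})$, groupe abélien libre de base les surjections $\{1,\dots,d\}\twoheadrightarrow\{1,\dots,k+n\}$ dont le but est réparti en $n$ blocs ordonnés, la différentielle agissant dans chaque bloc comme la différentielle de scindage (de type cobar). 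La formule de Künneth, appliquée suivant ces $n$ directions, ramène le calcul au cas $n=1$ : on y reconnaît le complexe des surjections associé à l'opérade commutative, dont la cohomologie est classiquement concentrée en degré maximal et donnée par l'opérade de Lie (dualité de Koszul entre $\mathrm{Com}$ et $\mathrm{Lie}$, ou homologie du poset des partitions). On obtiendrait ainsi un isomorphisme
$${\rm Ext}^{d-n}_{\F(\mathbf{gr})}(\mathfrak{a}^{\otimes n},\mathfrak{a}^{\otimes d})\simeq\underset{A_i\neq\emptyset}{\underset{\{1,\dots,d\}=A_1\sqcup\dots\sqcup A_n}{\bigoplus}}\mathrm{Lie}(A_1)\otimes\dots\otimes\mathrm{Lie}(A_n),$$
qui est non nul, car $d\geq n$ assure l'existence d'une telle décomposition et chaque $\mathrm{Lie}(A_i)$ est non nul.

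Le principal obstacle est donc l'identification de cette cohomologie en degré maximal. Le plus sûr me semble de la déduire, via Künneth, de la non-nullité classique de la cohomologie du complexe des surjections (c'est-à-dire de l'opérade de Lie) ; un simple décompte de rangs ne permettrait pas de conclure à la non-surjectivité de la différentielle incidente $cr_{d-1}\to cr_d$ (la source étant de rang supérieur au but dès que $d\geq 3$), ce qui impose le recours au calcul effectif ci-dessus.
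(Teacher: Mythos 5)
Votre démonstration est correcte, mais elle suit un chemin sensiblement différent de celui de l'article, pour la majoration comme pour la minoration. Pour la majoration, vous résolvez directement $\mathfrak{a}^{\otimes n}$ par la puissance tensorielle $n$-ième de la résolution~(\ref{eqbn}) (ce qui est licite par Künneth, les valeurs de $\bar P$ et de $\mathfrak{a}$ étant libres sur $\mathbb{Z}$), puis vous concluez en observant que ${\rm Hom}_{\F(\mathbf{gr})}(\bar P^{\otimes p},G)\simeq cr_p(G)(\mathbb{Z},\dots,\mathbb{Z})$ s'annule pour $p>d$ : l'argument est global et uniforme en $n$. L'article, lui, ne traite par la résolution bar que le cas $n=1$, puis procède par récurrence sur $n$ au moyen de l'adjonction entre le coproduit $*:\mathbf{gr}\times\mathbf{gr}\to\mathbf{gr}$ et la diagonale et d'une suite spectrale dont le terme $E_2$ fait intervenir le foncteur $G\mapsto {\rm Ext}^j_{\F(\mathbf{gr})}(\mathfrak{a},cr_2(F)(G,-))$, de degré $d-1$. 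Votre variante est plus directe et évite la suite spectrale ; celle de l'article évite d'avoir à vérifier que la puissance tensorielle d'une résolution en reste une. Pour la minoration, l'article se contente de citer la non-nullité de ${\rm Ext}^{d-n}_{\F(\mathbf{gr})}(\mathfrak{a}^{\otimes n},\mathfrak{a}^{\otimes d})$ établie dans \cite{V-cal}, alors que vous en esquissez un calcul effectif via le complexe des surjections et la dualité de Koszul entre l'opérade commutative et l'opérade de Lie ; la formule que vous obtenez est précisément le résultat de \cite{V-cal}. C'est le seul point de votre texte qui reste à l'état d'esquisse : l'identification de la différentielle duale de la différentielle bar avec celle du complexe des surjections, et la concentration de la cohomologie de ce dernier en degré maximal, demanderaient à être rédigées soigneusement (votre remarque selon laquelle un simple décompte de rangs ne peut pas conclure est, elle, tout à fait juste).
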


\begin{proof}
 Le foncteur ${\rm Ext}^i_{\F_d(\mathbf{gr})}(\mathfrak{a}^{\otimes n},-)$ est la restriction à $\F_d(\mathbf{gr})$ du foncteur ${\rm Ext}^i_{\F(\mathbf{gr})}(\mathfrak{a}^{\otimes n},-)$, d'après le théorème~\ref{thf}.

On suppose d'abord $n=1$. En utilisant la suite exacte~(\ref{eqbn}), on constate que ${\rm Ext}^i_{\F(\mathbf{gr})}(\mathfrak{a},F)$ s'identifie à l'homologie en degré $i$ d'un complexe de chaînes du type
$$\dots\to cr_{i+1}(F)(\mathbb{Z},\dots,\mathbb{Z})\to cr_i(F)(\mathbb{Z},\dots,\mathbb{Z})\to\dots\to cr_2(F)(\mathbb{Z},\mathbb{Z})\to cr_1(F)(\mathbb{Z})\,,$$
qui est nul à partir du degré $d$ si $F$ appartient à $\F_d(\mathbf{gr})$. Cela montre l'inégalité ${\rm hdim}_{\F_d(\mathbf{gr})}(\mathfrak{a})\leq d-1$.

On suppose maintenant $n>1$, et l'on établit l'inégalité ${\rm hdim}_{\F_d(\mathbf{gr})}(\mathfrak{a}^{\otimes n})\leq d-n$ (pour tout $d\geq n$) par récurrence sur $n$. En utilisant l'adjonction entre le coproduit catégorique $* : \mathbf{gr}\times\mathbf{gr}\to\mathbf{gr}$ et la diagonale, on obtient des isomorphismes naturels
$${\rm Ext}^*_{\F(\mathbf{gr})}(\mathfrak{a}^{\otimes n},F)\simeq {\rm Ext}^*_{\F(\mathbf{gr}\times\mathbf{gr})}(\mathfrak{a}^{\otimes (n-1)}\boxtimes\mathfrak{a},F\circ *),$$
où $\boxtimes$ désigne le produit tensoriel extérieur. Comme $\mathfrak{a}$ prend ses valeurs dans les groupes abéliens libres, on dispose pour tout bifoncteur $X$ sur $\mathbf{gr}$ d'une suite spectrale
$$E_2^{i,j}={\rm Ext}^i_{\F(\mathbf{gr})}\big(\mathfrak{a}^{\otimes (n-1)},G\mapsto {\rm Ext}^j_{\F(\mathbf{gr})}(\mathfrak{a},X(G,-))\big)\Rightarrow {\rm Ext}^{i+j}_{\F(\mathbf{gr}\times\mathbf{gr})}(\mathfrak{a}^{\otimes (n-1)}\boxtimes\mathfrak{a},X\circ *).$$
Le terme $E_2$ est inchangé si l'on remplace $X=F\circ *$ par son facteur direct $cr_2(F)$, car les foncteurs $\mathfrak{a}$ et $\mathfrak{a}^{\otimes (n-1)}$ sont réduits (puisque $n>1$). Mais le foncteur $G\mapsto {\rm Ext}^j_{\F(\mathbf{gr})}(\mathfrak{a},cr_2(F)(G,-))$ est de degré $d-1$ si $F$ est de degré $d$, de sorte que l'hypothèse de récurrence montre que
$${\rm Ext}^i_{\F(\mathbf{gr})}\big(\mathfrak{a}^{\otimes (n-1)},G\mapsto {\rm Ext}^j_{\F(\mathbf{gr})}(\mathfrak{a},X(G,-))\big)=0\;\text{si}\; i>(d-1)-(n-1)=d-n,$$
ce qui achève de prouver l'inégalité ${\rm hdim}_{\F_d(\mathbf{gr})}(\mathfrak{a}^{\otimes n})\leq d-n$.

L'inégalité ${\rm hdim}_{\F_d(\mathbf{gr})}(\mathfrak{a}^{\otimes n})\leq d-n$ se déduit de la non-nullité de ${\rm Ext}^{d-n}_{\F(\mathbf{gr})}(\mathfrak{a}^{\otimes n},\mathfrak{a}^{\otimes d})$, établie par le troisième auteur dans \cite{V-cal}.
\end{proof}

\begin{rem}\label{rq-fs}
L'analogue <<~abélien~>> de la proposition~\ref{pr-dh} n'est pas exact : notons $\mathbf{ab}$ la catégorie des groupes abéliens libres de rang fini. Dans la catégorie $\F_2(\mathbf{ab})$, le foncteur d'inclusion est de dimension homologique infinie et possède une résolution projective $4$-périodique, comme on le déduit de la section~6 de l'article \cite{P-extor}.

En revanche, on dispose quand même d'un résultat très similaire à la proposition~\ref{pr-dh} en remplaçant la catégorie source $\mathbf{gr}$ par une catégorie additive appropriée. Soient $p$ un nombre premier et $\F(p)$ la catégorie des foncteurs des $\mathbb{F}_p$-espaces vectoriels de dimension finie vers les $\mathbb{F}_p$-espaces vectoriels. Le foncteur d'inclusion est de dimension homologique finie $2p^{[{\rm log}_p(d)]}-2$ (où les crochets désignent la partie entière) dans la sous-catégorie $\F_d(p)$ des foncteurs polynomiaux de degré au plus $d$. Ce résultat est dû à Franjou et Smith (\cite{FrS}, §\,4.2).
\end{rem}

Les foncteurs $q_n(\bar{P})$ (appelés {\em foncteurs de Passi} dans \cite{HPV}) jouent un rôle fondamental dans la catégorie $\F(\mathbf{gr})$ (la démonstration du théorème~\ref{thf} en constitue une illustration) ; la proposition~\ref{pr-dh} permet facilement d'en calculer la dimension homologique.

\begin{cor}\label{cor-passi}
 Soient $d\geq n>0$ des entiers. Le foncteur $q_n(\bar{P})$ est de dimension homologique $d-n$ dans la catégorie $\F_d(\mathbf{gr})$.
\end{cor}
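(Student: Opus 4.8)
The plan is to produce an explicit --- and in fact minimal --- projective resolution of $q_n(\bar{P})$ in $\F(\mathbf{gr})$ and to read off both the vanishing and the non-vanishing from it, using Theorem~\ref{thf} to pass freely between $\F(\mathbf{gr})$ and $\F_d(\mathbf{gr})$. First I would record that $q_n(\bar{P})$ is the reduced part of $G\mapsto \mathbb{Z}[G]/\I^{n+1}(G)$, so that the definition of $K^n_1$ (Proposition~\ref{ida}) gives a short exact sequence
$$0\to K^n_1\to\bar{P}\to q_n(\bar{P})\to 0$$
in which $\bar{P}$ is projective and $K^n_1$ is the restriction to $\mathbf{gr}$ of $\I^{n+1}$.

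For the inequality ${\rm hdim}_{\F_d(\mathbf{gr})}(q_n(\bar{P}))\le d-n$ I would feed into this sequence the resolution of $K^n_1$ furnished by the construction of Lemma~\ref{lmr1} (iterating Proposition~\ref{pr-seb}), whose term in homological degree $j$ is a direct sum of copies of $\bar{P}^{\otimes(n+1+j)}$. Splicing it with $\bar{P}\to q_n(\bar{P})$ yields a projective resolution $R_\bullet$ of $q_n(\bar{P})$ with $R_0=\bar{P}$ and, for $i\ge 1$, with $R_i$ a sum of copies of $\bar{P}^{\otimes(n+i)}$. For any $F$ in $\F_d(\mathbf{gr})$ one has, exactly as in the proof of Proposition~\ref{pr-pira}, ${\rm Hom}_{\F(\mathbf{gr})}(\bar{P}^{\otimes(n+i)},F)\simeq cr_{n+i}(F)(\mathbb{Z},\dots,\mathbb{Z})$, which vanishes as soon as $n+i>d$. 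Hence the complex ${\rm Hom}(R_\bullet,F)$ is concentrated in cohomological degrees $\le d-n$, so ${\rm Ext}^i_{\F(\mathbf{gr})}(q_n(\bar{P}),F)=0$ for $i>d-n$, which is the upper bound.

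For the lower bound the crucial point is that $R_\bullet$ is \emph{minimal}: its differentials are built from multiplication maps in the group ring, all of which raise the augmentation filtration, so every differential lands in the radical (the kernel of the projection onto the largest semisimple, i.e. ``top'', quotient). Consequently, for a simple quotient $S$ of $\mathfrak{a}^{\otimes d}$ --- an object of $\F_d(\mathbf{gr})$ --- the complex ${\rm Hom}(R_\bullet,S)$ has vanishing differentials, whence ${\rm Ext}^{d-n}_{\F(\mathbf{gr})}(q_n(\bar{P}),S)\simeq {\rm Hom}(R_{d-n},S)$. Since $R_{d-n}$ is a nonzero sum of copies of $\bar{P}^{\otimes d}$, of which $\mathfrak{a}^{\otimes d}$ is the top quotient, this group is nonzero; by Theorem~\ref{thf} it also computes ${\rm Ext}^{d-n}_{\F_d(\mathbf{gr})}(q_n(\bar{P}),S)$, and the homological dimension is therefore exactly $d-n$.

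I expect the lower bound to be the main obstacle, specifically the verification of minimality together with the attendant radical/simple-quotient bookkeeping. The naive alternative --- transferring the non-vanishing of ${\rm Ext}^{d-n}(\mathfrak{a}^{\otimes n},\mathfrak{a}^{\otimes d})$ from Proposition~\ref{pr-dh} through the short exact sequence $0\to\mathfrak{a}^{\otimes n}\to q_n(\bar{P})\to q_{n-1}(\bar{P})\to 0$ --- does not go through directly, because the connecting homomorphism into the top ${\rm Ext}$ of $q_{n-1}(\bar{P})$ need not vanish; this is precisely why I would argue via minimality of the explicit resolution rather than via this filtration. The case $d=n$ is trivial, since $q_n(\bar{P})$ is then projective, being the image under the left adjoint $q_n$ of the projective $\bar{P}$.
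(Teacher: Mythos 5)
Your upper bound is correct and follows a route genuinely different from the paper's: the paper obtains ${\rm hdim}_{\F_d(\mathbf{gr})}(q_n(\bar{P}))\le d-n$ by \emph{descending} induction on $n$ from the short exact sequences $0\to\mathfrak{a}^{\otimes n}\to q_n(\bar{P})\to q_{n-1}(\bar{P})\to 0$, using Proposition~\ref{pr-dh} and the projectivity of $q_d(\bar{P})$ in $\F_d(\mathbf{gr})$, whereas you read the vanishing off an explicit resolution $R_\bullet$ with $R_0=\bar{P}$ and $R_i$ a (finite) sum of copies of $\bar{P}^{\otimes(n+i)}$ for $i\ge 1$. Your description of that resolution (iterating Proposition~\ref{pr-seb}, resolving the coefficients $K^{d-1}_1$ and totalizing, with K\"unneth guaranteed by torsion-freeness) is accurate, and the identification ${\rm Hom}_{\F(\mathbf{gr})}(\bar{P}^{\otimes k},F)\simeq cr_k(F)(\mathbb{Z},\dots,\mathbb{Z})$ does kill the complex in degrees $>d-n$. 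This part is fine and arguably more self-contained than the paper's.

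The lower bound, however, has a genuine gap: the claimed minimality of $R_\bullet$ is false. Leaving aside that objects of the $\mathbb{Z}$-linear category $\F(\mathbf{gr})$ have no largest semisimple quotient, the failure is already concrete at the first differential. The component $R_1\to R_0$ is the multiplication $\bar{P}^{\otimes(n+1)}\to\bar{P}$ with image $\I^{n+1}$, so the induced map ${\rm Hom}(R_0,S)\to{\rm Hom}(R_1,S)$ kills exactly those $f:\bar{P}\to S$ factoring through $q_n(\bar{P})=\I/\I^{n+1}$ --- and for $S$ of degree $d>n$ there are morphisms that do not so factor. For example, with $n=1$, $d=2$, the natural map $\I(G)\to S^2(G_{ab})\otimes\mathbb{F}_3$, $[g]-[1]\mapsto\bar{g}^2$, restricts on $\I^2/\I^3\simeq\mathfrak{a}^{\otimes 2}$ to twice the canonical surjection, hence is nonzero on $\I^2$; the differential out of degree $0$ of ${\rm Hom}(R_\bullet,S)$ is therefore nonzero. ``Raising the augmentation filtration'' means the image lies in a proper subfunctor, which does not prevent a map to a simple (or degree-$d$) functor from being nonzero on that subfunctor. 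So the isomorphism ${\rm Ext}^{d-n}(q_n(\bar{P}),S)\simeq{\rm Hom}(R_{d-n},S)$ is unjustified. The paper does not attempt this: it imports the non-vanishing of ${\rm Ext}^{d-n}_{\F(\mathbf{gr})}(q_n(\bar{P}),\mathfrak{a}^{\otimes d})$ from the explicit computations of \cite{V-cal} and transports it by Theorem~\ref{thf} (your worry about the connecting map in the filtration $q_n(\bar{P})\twoheadrightarrow q_{n-1}(\bar{P})$ is thus moot, since that filtration is only used for the upper bound). If you want to keep your resolution, a correct substitute for minimality is to test against $S:G\mapsto\Lambda^d(G_{ab}\otimes\mathbb{F}_p)$: since $\Lambda^d$ vanishes on $\mathbb{F}_p^{k}$ for $k<d$, all the groups ${\rm Hom}(R_j,S)$ with $j\ne d-n$ vanish for degree reasons, and ${\rm Hom}(R_{d-n},S)$ is a nonzero sum of copies of $cr_d(S)(\mathbb{Z},\dots,\mathbb{Z})\simeq\mathbb{F}_p$, which gives the non-vanishing without any radical bookkeeping.
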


\begin{proof}
 On dispose de suites exactes courtes
$$0\to\mathfrak{a}^{\otimes n}\to q_n(\bar{P})\to q_{n-1}(\bar{P})\to 0$$
(cf. la proposition~\ref{ida} et le début de la section~\ref{sp}), d'où des inégalités
$${\rm hdim}_{\F_d(\mathbf{gr})}(q_{n-1}(\bar{P}))\leq\max\big(1+{\rm hdim}_{\F_d(\mathbf{gr})}(\mathfrak{a}^{\otimes n}),{\rm hdim}_{\F_d(\mathbf{gr})}(q_n(\bar{P}))\big).$$
On en déduit l'inégalité ${\rm hdim}_{\F_d(\mathbf{gr})}(q_n(\bar{P}))\leq d-n$ par récurrence descendante sur $n$, en utilisant la proposition~\ref{pr-dh} et le caractère projectif de $q_d(\bar{P})$ dans la catégorie $\F_d(\mathbf{gr})$.

L'inégalité inverse provient de ce que ${\rm Ext}^i_{\F(\mathbf{gr})}(q_n(\bar{P}),\mathfrak{a}^{\otimes (n+i)})$ est non nul (cf. \cite{V-cal}) et du théorème~\ref{thf}.
\end{proof}

Notre prochain résultat concerne les catégories $\F_d(\mathbf{gr};\mathbb{Q})$. On note que le théorème~\ref{thf} vaut aussi dans ce contexte (de même que pour $\F_d(\mathbf{gr};k)$ et $\F(\mathbf{gr};k)$, où $k$ est un anneau quelconque), en utilisant l'adjonction entre le foncteur de rationalisation $\mathbf{Ab}\to\mathbb{Q}\ti\mathbf{Mod}$ et l'oubli, qui se propage au but par postcomposition.

\begin{pr}\label{prdq}
 Soit $d>0$ un entier. La catégorie $\F_d(\mathbf{gr};\mathbb{Q})$ est de dimension globale $d-1$ : on a 
${\rm Ext}^i_{\F_d(\mathbf{gr};\mathbb{Q})}=0$
pour $i\geq d$, tandis que le foncteur ${\rm Ext}^{d-1}_{\F_d(\mathbf{gr};\mathbb{Q})}$ n'est pas nul.
\end{pr}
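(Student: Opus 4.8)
Je me propose d'établir la proposition~\ref{prdq} en transposant la stratégie de la proposition~\ref{pr-dh} au cas rationnel, où le travail est notablement simplifié par le fait que $\mathbb{Q}$ est un corps de caractéristique nulle. La clé est que, sur $\mathbb{Q}$, les foncteurs polynomiaux se décomposent de façon particulièrement agréable. En effet, les foncteurs $G\mapsto (\I^r/\I^{r+1})(G^{\otimes\bullet})\otimes\mathbb{Q}$ deviennent, après rationalisation, isomorphes aux puissances tensorielles de l'abélianisation rationnelle $\mathfrak{a}_\mathbb{Q}:=\mathfrak{a}\otimes\mathbb{Q}$ (car le morphisme surjectif de l'algèbre tensorielle vers ${\rm gr}(\mathbb{Z}[G])$ rappelé au début de la section~\ref{sp} est un isomorphisme pour $G$ libre). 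La catégorie $\F_d(\mathbf{gr};\mathbb{Q})$ admet donc une description en termes de la structure des foncteurs polynomiaux sur $\mathbf{gr}$ à coefficients rationnels (cf.~\cite{DV2}, §\,2), selon laquelle tout objet simple est un facteur direct d'une puissance tensorielle $\mathfrak{a}_\mathbb{Q}^{\otimes n}$ pour un $n\leq d$.

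\textbf{Majoration de la dimension globale.}
Pour l'inégalité ${\rm gldim}_{\F_d(\mathbf{gr};\mathbb{Q})}\leq d-1$, je raisonnerais comme suit. D'après le théorème~\ref{thf} (valable sur $\mathbb{Q}$ comme signalé juste avant l'énoncé), il suffit de contrôler les groupes ${\rm Ext}^*_{\F(\mathbf{gr};\mathbb{Q})}(F,G)$ pour $F,G$ dans $\F_d(\mathbf{gr};\mathbb{Q})$, et il suffit de le faire lorsque $F=\mathfrak{a}_\mathbb{Q}^{\otimes n}$ avec $1\leq n\leq d$ puisque ces foncteurs engendrent la catégorie au sens où tout objet polynomial de degré $\leq d$ se construit à partir d'eux par extensions (via la filtration polynomiale et les suites exactes $0\to\mathfrak{a}_\mathbb{Q}^{\otimes n}\to q_n(\bar{P})\otimes\mathbb{Q}\to q_{n-1}(\bar{P})\otimes\mathbb{Q}\to 0$ du corollaire~\ref{cor-passi}). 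Or la proposition~\ref{pr-dh} donne exactement ${\rm hdim}_{\F_d(\mathbf{gr})}(\mathfrak{a}^{\otimes n})\leq d-n\leq d-1$, et le même argument vaut à coefficients rationnels. Il faut aussi vérifier que le foncteur constant $\mathbb{Q}$, seul objet simple non réduit, a dimension homologique nulle dans $\F_d(\mathbf{gr};\mathbb{Q})$, ce qui est clair puisqu'il se scinde comme facteur direct. En combinant ces bornes via le lemme~\ref{lmet} (ou plutôt son analogue sur la longueur des résolutions), on obtient ${\rm Ext}^i=0$ pour $i\geq d$.

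\textbf{Non-nullité en degré $d-1$.}
L'inégalité inverse, c'est-à-dire la non-nullité du foncteur ${\rm Ext}^{d-1}_{\F_d(\mathbf{gr};\mathbb{Q})}$, résulterait du cas extrémal $n=1$ de la proposition~\ref{pr-dh} : on y affirme que ${\rm Ext}^{d-1}_{\F_d(\mathbf{gr})}(\mathfrak{a},-)$ est non nul, et la non-nullité de ${\rm Ext}^{d-1}_{\F(\mathbf{gr})}(\mathfrak{a},\mathfrak{a}^{\otimes d})$ (due à~\cite{V-cal}) subsiste après tensorisation par $\mathbb{Q}$, car ce groupe d'extensions, calculé comme homologie du complexe des effets croisés $cr_\bullet(\mathfrak{a}^{\otimes d})(\mathbb{Z},\dots,\mathbb{Z})$, est déjà libre de type fini sur $\mathbb{Z}$ en degré maximal. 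Le théorème~\ref{thf} transporte alors cette non-nullité de $\F(\mathbf{gr};\mathbb{Q})$ à $\F_d(\mathbf{gr};\mathbb{Q})$.

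\textbf{L'obstacle principal.}
Le point le plus délicat me semble être de justifier rigoureusement que tout objet de $\F_d(\mathbf{gr};\mathbb{Q})$ admet une résolution de longueur $\leq d-1$, c'est-à-dire de passer des générateurs $\mathfrak{a}_\mathbb{Q}^{\otimes n}$ à un objet \emph{arbitraire}. Sur $\mathbb{Q}$, l'argument le plus propre consiste à exploiter la semi-simplicité des représentations des groupes symétriques : la décomposition de $\mathfrak{a}_\mathbb{Q}^{\otimes n}$ selon les types d'isotypie fournit les objets simples, et tout objet de degré exactement $n$ est une extension itérée de facteurs directs de $\mathfrak{a}_\mathbb{Q}^{\otimes m}$ pour $m\leq n$. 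La dimension homologique d'un facteur direct est majorée par celle de l'objet ambiant, et celle d'une extension par le maximum des dimensions (plus, éventuellement, un décalage qui ici ne fait jamais dépasser $d-1$ grâce à la décroissance $d-n$ de la proposition~\ref{pr-dh} quand $n$ croît). C'est cette interaction entre la filtration polynomiale et la borne $d-n$ — qui décroît précisément assez vite — qu'il faudra expliciter avec soin pour garantir que la dimension globale n'excède pas $d-1$.
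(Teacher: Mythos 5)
Votre démarche est pour l'essentiel celle de l'article : théorème~\ref{thf} pour ramener le calcul à $\F(\mathbf{gr};\mathbb{Q})$, résultats de structure de \cite{DV2} et semi-simplicité de $\mathbb{Q}[\mathfrak{S}_d]$ pour se ramener aux facteurs directs de $\mathfrak{a}^{\otimes m}\otimes\mathbb{Q}$, proposition~\ref{pr-dh} pour la borne $d-m$, et \cite{V-cal} pour la minoration. Deux points restent cependant fragiles. D'abord, l'étape que vous identifiez vous-même comme l'obstacle principal est exactement celle que l'article organise autrement : il n'affirme pas que tout objet de $\F_d(\mathbf{gr};\mathbb{Q})$ est extension itérée de facteurs directs de puissances tensorielles de l'abélianisation (assertion plus forte, qui exigerait de savoir que $\mathfrak{a}^{\otimes d}\underset{\mathbb{Q}[\mathfrak{S}_d]}{\otimes}M$ est semi-simple dans $\F_d(\mathbf{gr};\mathbb{Q})$), mais utilise seulement l'existence d'un morphisme $u\colon F\to\mathfrak{a}^{\otimes d}\underset{\mathbb{Q}[\mathfrak{S}_d]}{\otimes}M$ à noyau et conoyau dans $\F_{d-1}(\mathbf{gr};\mathbb{Q})$, combinée à une récurrence dont l'énoncé quantifie simultanément le degré de la source et celui du but (la borne $d-n$ de la proposition~\ref{pr-dh} dépend du degré du but, ce que votre rédaction ne prend pas en compte) ; votre esquisse est réparable mais n'est pas encore une démonstration. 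Ensuite, votre justification de la survie de ${\rm Ext}^{d-1}_{\F(\mathbf{gr})}(\mathfrak{a},\mathfrak{a}^{\otimes d})$ après tensorisation par $\mathbb{Q}$ est erronée : ce groupe est la cohomologie en degré maximal du complexe des effets croisés, donc un \emph{quotient} de $cr_d(\mathfrak{a}^{\otimes d})(\mathbb{Z},\dots,\mathbb{Z})$, et un quotient de groupe abélien libre peut avoir de la torsion ; il faut invoquer, comme le fait l'article, le fait établi dans \cite{V-cal} que ce groupe est non seulement non nul mais sans torsion.
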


\begin{proof}
 On montre d'abord l'inégalité ${\rm gldim}\,\F_d(\mathbf{gr};\mathbb{Q})\leq d-1$. Par le théorème~\ref{thf}, cela équivaut à dire que la restriction à $\F_d(\mathbf{gr};\mathbb{Q})^{op}\times \F_d(\mathbf{gr};\mathbb{Q})$ du foncteur ${\rm Ext}^i_{\F(\mathbf{gr};\mathbb{Q})}$ est nulle pour $i\geq d$. Pour cela, on va montrer par récurrence sur $d\in\mathbb{N}^*$ l'assertion suivante :
$$\forall i\geq n\geq d\quad\forall F\in {\rm Ob}\,\F_d(\mathbf{gr};\mathbb{Q})\quad\forall G\in {\rm Ob}\,\F_n(\mathbf{gr};\mathbb{Q})\qquad {\rm Ext}^i(F,G)=0$$
(on omet dans la suite l'indice $\F(\mathbf{gr};\mathbb{Q})$ pour les groupes d'extensions).
 On suppose donc le résultat établi pour les entiers strictement inférieurs à $d$.

Les résultats de structure de la catégorie $\F_d(\mathbf{gr};\mathbb{Q})$ donnés dans \cite{DV2} (voir la section~2 et la démonstration de la proposition~5.3 de cet article) impliquent que, pour tout foncteur $F$ de $\F_d(\mathbf{gr};\mathbb{Q})$, il existe une représentation $M$ du groupe symétrique $\mathfrak{S}_d$ et un morphisme $u : F\to\mathfrak{a}^{\otimes d}\underset{\mathbb{Q}[\mathfrak{S}_d]}{\otimes} M$ dont le noyau $N$ et le conoyau $C$ appartiennent à $\F_{d-1}(\mathbf{gr};\mathbb{Q})$. Pour $d=1$, on peut même supposer que $u$ est un isomorphisme, puisque la partie constante des foncteurs se scinde. L'hypothèse de récurrence montre que ${\rm Ext}^i(N,G)$ et ${\rm Ext}^i(C,G)$ sont nuls pour $G$ dans $\F_n(\mathbf{gr};\mathbb{Q})$ et $i\geq n\geq d-1$. Par ailleurs, comme l'anneau $\mathbb{Q}[\mathfrak{S}_d]$ est semi-simple, le foncteur $T:=\mathfrak{a}^{\otimes d}\underset{\mathbb{Q}[\mathfrak{S}_d]}{\otimes} M$ est somme directe de facteurs directs de $\mathfrak{a}^{\otimes d}$. La proposition~\ref{pr-dh} montre donc que ${\rm Ext}^i(T,G)=0$ pour $G$ dans $\F_n(\mathbf{gr};\mathbb{Q})$ et $i>n-d\geq 0$. On en déduit ${\rm Ext}^i(F,G)=0$ pour $G$ dans $\F_n(\mathbf{gr};\mathbb{Q})$ et $i\geq n\geq d$, ce qui termine la démonstration de l'inégalité ${\rm gldim}\,\F_d(\mathbf{gr};\mathbb{Q})\leq d-1$.

L'inégalité ${\rm gldim}\,\F_d(\mathbf{gr};\mathbb{Q})\geq d-1$ se déduit de ce que le groupe abélien ${\rm Ext}^{d-1}_{\F(\mathbf{gr})}(\mathfrak{a},\mathfrak{a}^{\otimes d})$ est non seulement non nul, mais aussi sans torsion (cf. \cite{V-cal}).
\end{proof}

\begin{rem}\label{rq-dgl}
 Contrairement à ce qui advient pour la plupart des autres résultats du présent article, dont les analogues sur $\mathbf{ab}$ sont plus difficiles à montrer que sur $\mathbf{gr}$ (voir notamment la remarque~\ref{rq-fs}), quand ils ne sont pas faux, la si\-tu\-ation est plus simple pour $\F_d(\mathbf{ab};\mathbb{Q})$, qui est une catégorie {\em semi-simple} pour tout $d\in\mathbb{N}$ (et ce résultat classique est aisé à prouver).
\end{rem}

En revanche les catégories $\F_d(\mathbf{gr})$ (resp. $\F_d(\mathbf{gr};\mathbb{F}_p)$, où $p$ est un nombre premier) sont de dimension globale infinie dès que $d\geq 2$ (resp. $d\geq p$). Cela provient du même résultat pour l'algèbre de groupe $\mathbb{Z}[\mathfrak{S}_d]$ (resp. $\mathbb{F}_p[\mathfrak{S}_d]$) et de la proposition ci-dessous.

Rappelons tout d'abord (cf. \cite{DV2}) que l'on dispose, pour tout $d\in\mathbb{N}$, d'un foncteur ${\rm cr}_d : \F_d(\mathbf{gr})\to\mathbb{Z}[\mathfrak{S}_d]\ti\mathbf{Mod}$ associant l'effet croisé $cr_d(F)(\mathbb{Z},\dots,\mathbb{Z})$ à $F$ ; ce foncteur possède un adjoint à gauche $\alpha_d$ et un adjoint à droite $\beta_d$. Le foncteur $\alpha_d$ possède une expression explicite simple : $\alpha_d(M)=\mathfrak{a}^{\otimes d}\underset{\mathbb{Z}[\mathfrak{S}_d]}{\otimes} M$, mais il n'est pas exact (pour $d\geq 2$). En revanche, le foncteur $\beta_d$ ne semble pas posséder d'expression simple. Néanmoins, on a :
\begin{pr}\label{pr-beta}
 Pour tout $d\in\mathbb{N}$, le foncteur $\beta_d : \mathbb{Z}[\mathfrak{S}_d]\ti\mathbf{Mod}\to\F_d(\mathbf{gr})$ est exact. Il induit des isomorphismes naturels
$${\rm Ext}^*_{\F(\mathbf{gr})}(\beta_d(M),\beta_d(N))\simeq {\rm Ext}^*_{\F_d(\mathbf{gr})}(\beta_d(M),\beta_d(N))\simeq {\rm Ext}^*_{\mathbb{Z}[\mathfrak{S}_d]}(M,N).$$
\end{pr}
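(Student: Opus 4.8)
Le plan est de traiter séparément l'exactitude de $\beta_d$ et la chaîne d'isomorphismes, en réservant le c\oe ur du travail au calcul du $\mathbb{Z}[\mathfrak{S}_d]$-module ${\rm cr}_d(q_d(P^\mathbf{gr}_m))$. L'isomorphisme médian ${\rm Ext}^*_{\F_d(\mathbf{gr})}(\beta_d(M),\beta_d(N))\xrightarrow{\simeq}{\rm Ext}^*_{\F(\mathbf{gr})}(\beta_d(M),\beta_d(N))$ est immédiat, puisque $\beta_d(M)$ et $\beta_d(N)$ sont par construction des objets de $\F_d(\mathbf{gr})$ : c'est un cas particulier du théorème~\ref{thf}. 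Il ne reste donc qu'à établir l'exactitude de $\beta_d$ et l'isomorphisme de droite.

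Je commencerais par le point crucial, le calcul de ${\rm cr}_d(q_d(P^\mathbf{gr}_m))$ comme $\mathbb{Z}[\mathfrak{S}_d]$-module. Le foncteur ${\rm cr}_d$ est exact (il s'obtient en appliquant un idempotent naturel à l'évaluation en $\mathbb{Z}^{*d}$) et annule les foncteurs de degré $<d$ ; par exactitude, sa valeur sur $q_d(P^\mathbf{gr}_m)$ (dont la filtration $\I$-adique a, compte tenu de la proposition~\ref{ida}, pour gradué $G\mapsto(\I^r/\I^{r+1})(G^m)$, $0\leq r\leq d$) ne dépend donc que du terme de plus haut degré $G\mapsto(\I^d/\I^{d+1})(G^m)$. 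Pour $G$ libre, la proposition~\ref{prgp} décompose ce terme en $\bigoplus_{i_1+\dots+i_m=d}\bigotimes_{s=1}^m(\I^{i_s}/\I^{i_s+1})(G)$, chaque facteur s'identifiant à $G_{ab}^{\otimes i_s}$ d'après Magnus (cf. section~\ref{sp}). En extrayant la partie multilinéaire en les $d$ copies de $\mathbb{Z}$ pour former l'effet croisé, on voit que l'action de $\mathfrak{S}_d$ (permutation de ces copies) est libre, de sorte que ${\rm cr}_d(q_d(P^\mathbf{gr}_m))$ est un $\mathbb{Z}[\mathfrak{S}_d]$-module \emph{libre}. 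J'en déduirais aussitôt l'exactitude de $\beta_d$ : par adjonction,
$$\beta_d(N)(\mathbb{Z}^{*m})\simeq{\rm Hom}_{\F_d(\mathbf{gr})}(q_d(P^\mathbf{gr}_m),\beta_d(N))\simeq{\rm Hom}_{\mathbb{Z}[\mathfrak{S}_d]}({\rm cr}_d(q_d(P^\mathbf{gr}_m)),N)$$
est un foncteur exact de $N$, car sa source ${\rm cr}_d(q_d(P^\mathbf{gr}_m))$ est $\mathbb{Z}[\mathfrak{S}_d]$-projective ; l'exactitude se testant au but, donc objet par objet ($\F_d(\mathbf{gr})$ étant stable par limites et colimites dans $\F(\mathbf{gr})$), le foncteur $\beta_d$ est exact.

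Pour l'isomorphisme de droite, j'établirais que la counité ${\rm cr}_d\beta_d\to{\rm id}$ est un isomorphisme. L'unité ${\rm id}\to{\rm cr}_d\alpha_d$ en est un, car $cr_d(\mathfrak{a}^{\otimes d})(\mathbb{Z},\dots,\mathbb{Z})\simeq\mathbb{Z}[\mathfrak{S}_d]$ (représentation régulière) donne ${\rm cr}_d\alpha_d(M)\simeq\mathbb{Z}[\mathfrak{S}_d]\underset{\mathbb{Z}[\mathfrak{S}_d]}{\otimes}M\simeq M$ ; or il est classique que, dans un triplet d'adjonctions $\alpha_d\dashv{\rm cr}_d\dashv\beta_d$, le foncteur $\alpha_d$ est pleinement fidèle si et seulement si $\beta_d$ l'est (voir aussi \cite{DV2}). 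Comme ${\rm cr}_d$ est exact, son adjoint à droite $\beta_d$ préserve les injectifs ; joint à l'exactitude de $\beta_d$ déjà acquise, cela montre que, pour toute résolution injective $N\to I^\bullet$ dans $\mathbb{Z}[\mathfrak{S}_d]\ti\mathbf{Mod}$, le complexe $\beta_d(N)\to\beta_d(I^\bullet)$ est une résolution injective dans $\F_d(\mathbf{gr})$. L'adjonction et l'isomorphisme ${\rm cr}_d\beta_d(M)\simeq M$ fournissent alors
$${\rm Ext}^*_{\F_d(\mathbf{gr})}(\beta_d(M),\beta_d(N))\simeq H^*\big({\rm Hom}_{\mathbb{Z}[\mathfrak{S}_d]}({\rm cr}_d\beta_d(M),I^\bullet)\big)\simeq{\rm Ext}^*_{\mathbb{Z}[\mathfrak{S}_d]}(M,N).$$

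L'obstacle principal est clairement le calcul de ${\rm cr}_d(q_d(P^\mathbf{gr}_m))$ du deuxième paragraphe. C'est le seul point non formel de la démonstration : un foncteur préserve les projectifs si et seulement si son adjoint à droite est exact, de sorte que l'exactitude de $\beta_d$ équivaut exactement à la $\mathbb{Z}[\mathfrak{S}_d]$-projectivité des ${\rm cr}_d(q_d(P^\mathbf{gr}_m))$ --- l'argument serait circulaire sans le calcul explicite. Et ce calcul repose de façon essentielle sur la structure du gradué associé à un produit direct de groupes libres dégagée à la proposition~\ref{prgp}, ce qui relie agréablement cette application aux préliminaires de la section~\ref{sp}.
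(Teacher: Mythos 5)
Votre démonstration est correcte et suit pour l'essentiel la même voie que celle de l'article : le point clef est dans les deux cas le calcul de ${\rm cr}_d(q_d(P^\mathbf{gr}_m))$ via le quotient de plus haut degré de la filtration par les puissances de $\I$ (le noyau $Q^d_m$ de $q_d(P^\mathbf{gr}_m)\twoheadrightarrow q_{d-1}(P^\mathbf{gr}_m)$ dans l'article), la proposition~\ref{prgp} et l'identification $cr_d(\mathfrak{a}^{\otimes d})(\mathbb{Z},\dots,\mathbb{Z})\simeq\mathbb{Z}[\mathfrak{S}_d]$, puis la propagation de l'adjonction entre foncteurs exacts aux groupes d'extensions. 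La seule différence, mineure, est que vous redémontrez l'isomorphisme de coünité ${\rm cr}_d\beta_d\xrightarrow{\simeq}{\rm Id}$ à partir de l'unité ${\rm Id}\xrightarrow{\simeq}{\rm cr}_d\alpha_d$ et du lemme classique sur les triplets d'adjonctions, là où l'article se contente de citer \cite{DV2}.
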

 
\begin{proof}
 Pour voir que $\beta_d$ est exact, il suffit de vérifier que son adjoint à gauche ${\rm cr}_d$ envoie les générateurs projectifs $q_d(P^\mathbf{gr}_n)$ de $\F_d(\mathbf{gr})$ (où $n$ parcourt $\mathbb{N}$) sur des $\mathbb{Z}[\mathfrak{S}_d]$-modules projectifs. Comme ${\rm cr}_d$ est exact et tue les foncteurs de degré strictement inférieur à $d$, il prend la même valeur sur $q_d(P^\mathbf{gr}_n)$ et le noyau $Q_n^d$ de la projection $q_d(P^\mathbf{gr}_n)\twoheadrightarrow q_{d-1}(P^\mathbf{gr}_n)$. Or on a $Q_1^d\simeq\mathfrak{a}^{\otimes d}$ (car le gradué de l'anneau d'un groupe libre est isomorphe à l'algèbre tensorielle de son abélianisation --- cf. section~\ref{sp} ; on utilise également la proposition~\ref{ida}). En général, en utilisant le premier point de la proposition~\ref{prgp} (et encore la proposition~\ref{ida}), on obtient :
$$Q_n^d\simeq\underset{i_1+\dots+i_n=d}{\bigoplus}\mathfrak{a}^{\otimes i_1}\otimes\dots\otimes\mathfrak{a}^{\otimes i_n}\simeq\underset{i_1+\dots+i_n=d}{\bigoplus}\mathfrak{a}^{\otimes d}.$$
Comme ${\rm cr}_d$ envoie le foncteur $\mathfrak{a}^{\otimes d}$ sur le $\mathbb{Z}[\mathfrak{S}_d]$-module $\mathbb{Z}[\mathfrak{S}_d]$, cela démontre l'exactitude de $\beta_d$. 

L'adjonction entre les foncteurs exacts $\beta_d$ et ${\rm cr}_d$ se propage aux groupes d'extensions :
$${\rm Ext}^*_{\F_d(\mathbf{gr})}(F,\beta_d(N))\simeq  {\rm Ext}^*_{\mathbb{Z}[\mathfrak{S}_d]}({\rm cr}_d (F),N).$$
Comme la coünité ${\rm cr}_d\beta_d\to {\rm Id}$ est un isomorphisme (cf. \cite{DV2}, §\,2, par exemple), en utilisant le théorème~\ref{thf}, on en déduit la dernière assertion de l'énoncé.
\end{proof}

\begin{rem}\begin{enumerate}
\item En reprenant la démonstration précédente, il est facile de donner une expression explicite de $\beta_d(M)(G)$ fonctorielle en $M$, {\em mais pas en $G\in {\rm Ob}\,\mathbf{gr}$}. 
\item La proposition~\ref{pr-beta} contraste encore avec la situation de $\F(\mathbf{ab})$ (ou plus généralement de $\F(\A)$, où $\A$ est une petite catégorie {\em additive}). On y dispose de même d'un foncteur
$$\F_d(\mathbf{ab})\to\mathbb{Z}[\mathfrak{S}_d]\ti\mathbf{Mod}\quad F\mapsto cr_d(F)(\mathbb{Z},\dots,\mathbb{Z})$$
qui possède des adjoints de chaque côté. L'adjoint à gauche est très similaire au foncteur $\alpha_d$ évoqué avant la proposition (il est donné par 
$M\mapsto T^d\underset{\mathbb{Z}[\mathfrak{S}_d]}{\otimes} M\simeq (T^d\otimes M)_{\mathfrak{S}_d}$, où $T^d$ désigne la $d$-ème puissance tensorielle). L'adjoint à droite est analogue à l'adjoint à gauche (il est donné par $M\mapsto (T^d\otimes M)^{\mathfrak{S}_d}$) ; c'est donc encore un foncteur explicite mais {\em non exact} si $d>1$. La référence originelle pour cette question classique est \cite{Pira-rec}.
           \end{enumerate}
\end{rem}

\bibliographystyle{plain}
\bibliography{bibli-DPV.bib}

\end{document}